\newcommand{\keywords}[1]{\par\noindent\textbf{Keywords:} #1}
\theoremstyle{plain}
\newtheorem{theorem}{Theorem}[section]
\newtheorem{proposition}[theorem]{Proposition}
\newtheorem{corollary}[theorem]{Corollary}
\newtheorem{lemma}[theorem]{Lemma}
\newtheorem{assumption}[theorem]{Assumption}
\theoremstyle{definition}
\newtheorem{remark}{Remark}[section]
\newtheorem{definition}{Definition}[section]
\title{Memory-Type Null Controllability of Heat Equations with Delay Effects}
\author[1]{Dev Prakash Jha\thanks{Corresponding author: \texttt{devprakash.22@res.iist.ac.in}}}
\author[1]{Raju K. George\thanks{\texttt{george@iist.ac.in}}}
\affil[1]{Department of Mathematics, Indian Institute of Space Science and Technology, Thiruvananthapuram, 695547, Kerala, India}
\date{\today}
\begin{document}

\maketitle

\begin{abstract}
This article is devoted to the study of null controllability for evolution equations that incorporate both memory and delay effects. The problem is particularly challenging due to the presence of memory integrals and delayed states, which necessitate strengthening the classical controllability requirement to ensure complete rest at the final time. To address this, we adopt the notion of Delay and memory-type null controllability, which demands the vanishing of the state, the accumulated memory term, and the influence of delay at the terminal time. Utilizing duality arguments, we reduce the controllability analysis to proving suitable observability inequalities for the corresponding adjoint system. We begin with finite-dimensional systems and establish rank-type conditions characterizing controllability. These insights are then extended to parabolic partial differential equations with delay and memory terms. By leveraging Carleman estimates and time-dependent control strategies, we derive sufficient conditions under which controllability can be achieved. Numerical simulations validate the theoretical results and illustrate the critical role of moving control regions in neutralizing the effects of memory and delay.
\end{abstract}

\keywords{Evolution equation with memory, Delay and memory-type null controllability, rank condition, Carleman estimates, observability estimate}

\section{Introduction}
\label{sec:intro}

The study of controllability in evolution equations has a long-standing history, beginning with finite-dimensional linear systems (see \cite{kalman1960general}). In these systems, controllability is typically characterized using algebraic conditions based on the rank of matrices that define the system dynamics and control mechanisms. Over the years, this theory has been extended to a broader class of systems, including those in infinite-dimensional spaces, as well as to nonlinear and stochastic models (see, e.g., \cite{avdonin1995families,fursikov1996controllability,lions1988controlabilite,russell1978controllability,zuazua2007controllability}, and references therein).

Nevertheless, many existing studies address evolution equations that incorporate memory effects, which are often motivated by physical considerations. For instance, in \cite{gurtin1968general}, a modification of Fourier’s law was introduced to resolve the nonphysical implication of instantaneous heat propagation found in the classical heat equation (cf. \cite{cattaneo1958form}), resulting in a heat equation with memory effects.

\begin{equation}\label{eq:Intro_eq_1}
\left\{
\begin{aligned}
   y_t - \sum_{i,j=1}^{n} \left\{ a^{ij}(x) \left[ a y_{x_i} + \int_0^t b(t-s,x)y_{x_i}(s,x)\,ds \right] \right\}_{x_j} = u\chi_{\omega}(x) \quad \text{in } Q, \\
    y = 0 \quad \text{on } \Sigma, \\
    y(0) = y_0 \quad \text{in } \Omega.
\end{aligned}
\right.
\end{equation}

In the system above, \( b(\cdot,\cdot) \) denotes a smooth memory kernel, and \( a \in \{0,1\} \) is a fixed parameter. The domain \( \Omega \subset \mathbb{R}^n \), with \( n \in \mathbb{N} \), is a bounded open set with a boundary \( \partial \Omega \) that is infinitely differentiable. The spatial variable is denoted by \( x = (x_1, \ldots, x_n) \), and \( (a^{ij}(x))_{n \times n} \) is a uniformly positive definite matrix with adequate regularity. The control is localized within a nonempty open subset \( \omega \subset \Omega \). We define \( Q = (0,T) \times \Omega \) and \( \Sigma = (0,T) \times \partial \Omega \), where \( T > 0 \) is a finite time horizon. The symbol \( \chi_{\omega} \) stands for the characteristic function of \( \omega \), and \( \nu = \nu(x) \) is the outward unit normal vector at \( x \in \partial \Omega \).

The existence, uniqueness, and propagation properties of such models have been explored in \cite{yong2011heat}.

When the memory effect is omitted, i.e., \( b(\cdot) \equiv 0 \), and \( a = 1 \), the system simplifies to the standard heat equation:
\begin{equation}\label{eq:Intro_eq_2}
\left\{
\begin{aligned}
    y_t - \sum_{i,j=1}^{n} \left[ a^{ij}(x)y_{x_i} \right]_{x_j} &= u\chi_{\omega}(x) \quad \text{in } Q, \\
    y &= 0 \quad \text{on } \Sigma, \\
    y(0) &= y_0 \quad \text{in } \Omega.
\end{aligned}
\right.
\end{equation}

The null controllability of this classical formulation has been thoroughly studied. For example, it is established in \cite{fursikov1996controllability} that for any \( T > 0 \) and any nonempty open set \( \omega \subset \Omega \), equation (\ref{eq:Intro_eq_2}) is null controllable in \( L^2(\Omega) \). That is, for any initial state \( y_0 \in L^2(\Omega) \), there exists a control input \( u \in L^2((0,T) \times \omega) \) such that the corresponding weak solution \( y(\cdot) \in C([0,T]; L^2(\Omega)) \cap C((0,T]; H_0^1(\Omega)) \) satisfies
\begin{equation}\label{eq:Intro_eq_3}
    y(T) = 0.
\end{equation}

In the context of parabolic equations, it is well-known that due to the infinite speed of propagation, the controllability time $T$ and the control region $\omega$ can be chosen arbitrarily small.

However, this favorable null controllability property does not extend straightforwardly to models with memory effects, where the dynamics are more intricate and less understood (see, e.g., \cite{guerrero2013remarks,halanay2012lack}).

When $a = 0$, it has been demonstrated under certain conditions in \cite{yong2011heat} that system (\ref{eq:Intro_eq_1}) exhibits finite speed of propagation for finite heat pulses. This behavior aligns more closely with physical heat conduction and has meaningful implications in control theory. Specifically, assuming suitable conditions on the coefficients $a^{ij}(\cdot)$ and on the geometry of the control region $\omega$, and provided that $T > 0$ is sufficiently large, the system is controllable in the sense that for any initial and target states $y_0, y_1 \in L^2(\Omega)$, there exists a control function $u \in L^2((0,T) \times \omega)$ such that the corresponding solution $y \in C([0,T]; L^2(\Omega))$ satisfies $y(T) = y_1$ in $\Omega$ \cite{fu2009controllability}. Related discussions can be found in \cite{castro2013exact,russell1978controllability,liu1999rapid,zhang2010unified}.

It is important to note, however, that this type of result does not address the state of the accumulated memory at time $t = T$. Hence, even if $y(T) \equiv 0$, the system cannot be guaranteed to reach full rest unless the memory term also vanishes:
\begin{equation}\label{eq:Intro_eq_4}
    \int_0^T b(T - s, x) y_{x_i}(s,x)\, ds \equiv 0.
\end{equation}

Thus, the controllability described above should be interpreted as partial controllability, since complete control would require steering both the state and the memory term to zero.

In the case where $a = 1$, equation (\ref{eq:Intro_eq_1}) corresponds to a controlled heat equation with a parabolic memory kernel. Recent studies (see \cite{guerrero2013remarks,halanay2012lack,pandolfi2013boundary,zhou2014interior}) have shown that null controllability may fail when the memory kernel $b(\cdot,\cdot)$ is a nontrivial constant and the control region $\omega$ remains fixed over time. Nonetheless, approximate controllability remains achievable under specific settings (see \cite{zhou2014interior,barbu2000controllability}). The complete characterization of this behavior remains an open question, which this work seeks to address.

The main contributions of this paper are twofold: first, to introduce an appropriate definition of controllability for systems involving memory; and second, to demonstrate that, although null controllability may not hold with static control supports, it can be achieved when the support of the control moves in time and eventually covers the entire spatial domain. This approach aligns with the strategy used in \cite{chaves2014null} for viscoelastic systems.

To explore this connection between viscoelastic models and the memory-dependent systems under consideration, we begin by examining the simplest illustrative case:

\begin{equation}\label{eq:Intro_eq_5}
\left\{
\begin{aligned}
    y_t - \Delta y + \int_0^t y(s)\,ds &= u\chi_\omega(x) && \text{in } Q, \\
    y &= 0 && \text{on } \Sigma, \\
    y(0) &= y_0 && \text{in } \Omega.
\end{aligned}
\right.
\end{equation}

Introducing the auxiliary variable $z(t) = \int_0^t y(s)\,ds$, the system can be reformulated as
\begin{equation}
\label{eq:Intro_eq_6}
\left\{
\begin{aligned}
    y_t - \Delta y + z &= u\chi_\omega(x) && \text{in } Q, \\
    z_t &= y && \text{in } Q, \\
    y = z &= 0 && \text{on } \Sigma, \\
    y(0) = y_0,\, z(0) &= 0 && \text{in } \Omega.
\end{aligned}
\right.
\end{equation}

This reformulated system couples a parabolic partial differential equation with an ordinary differential equation, a structure commonly encountered in viscoelastic models (refer to \cite{chaves2014null}). Achieving full null controllability entails steering both the state $y$ and the memory term $z = \int_0^t y(s)\,ds$ to zero at the final time $t = T$. However, due to the influence of the ODE component, controllability cannot be achieved if the control is restricted to a fixed subregion $\omega$ of $\Omega$. Consequently, the control support must traverse the domain throughout the control interval to ensure system controllability.

As will be demonstrated, the strategies and analytical methods outlined in \cite{chaves2014null,chaves2017controllability} can be effectively extended to address this modified framework.

To formulate our main results, we consider the following abstract framework:
\begin{equation}
\label{eq:Intro_eq_7}
\left\{
\begin{aligned}
    y_t(t) &= Ay(t) + A_1 y(t-h) + \int_0^t M(t - s)y(s)\,ds + B(t)u(t), \quad t \in [0, T], \\
    y_0(\theta) &= \varphi(\theta),\quad \theta \in [-h, 0].
\end{aligned}
\right.
\end{equation}
Here, $y(\cdot): [-h, \tau] \to Y$ represents the state variable, and $u(\cdot): [0, \tau] \to U$ denotes the control function, where $Y$ and $U$ are Hilbert spaces. Let $C$ denote the Banach space of all continuous functions from $[-h, 0]$ into $Y$, equipped with the supremum norm. The operator $B: U \to Y$ is bounded and linear. The operator $A: D(A) \subset Y \to Y$ is closed with a dense domain $D(A)$ in $Y$, and it generates a $C_0$-semigroup $\{T(t)\}_{t \geq 0}$ on $Y$. Furthermore, $A_1: Y \to Y$ is a bounded linear operator, $\varphi \in C$, $M(\cdot) \in L^1(0,T; \mathcal{L}(Y))$, and $B(\cdot) \in L^2(0,T; \mathcal{L}(U;Y))$.

It can be verified (cf. \cite{yong2011heat}) that, under suitable regularity assumptions on the coefficients $a^{ij}(\cdot)$ and $b(\cdot,\cdot)$, equation \eqref{eq:Intro_eq_1} is a particular case of the general form \eqref{eq:Intro_eq_7}.

Similar to systems without Delay and Memory components, system~\eqref{eq:Intro_eq_7} is said to be \emph{null controllable} if, for every initial state $y_0 \in Y$, there exists a control $u(\cdot) \in L^2(0,T;U)$ such that the corresponding solution $y(\cdot)$ satisfies $y(T) = 0$.

Chaves-Silva~\cite{chaves2017controllability} introduced the notion of \emph{Delay and memory-type null controllability} in the case where $A_1 = 0$, noting that the presence of the memory term $\int_0^t M(t-s)y(s)\,ds$ renders the condition $y(T) = 0$ alone insufficient.

To ensure the system reaches the null state, it is necessary to impose the additional condition:
\begin{equation}
\label{eq:Intro_eq_8}
\int_0^T M(T - s)y(s)\,ds = 0.
\end{equation}
To ensure that the system not only reaches the rest state at time $T$, but also remains at rest for all $t \geq T$, we must impose conditions that eliminate the effects of both the memory and the delay after time $T$. These requirements are:

\begin{equation}
\label{eq:memory_delay_condition}
  y(T) = 0 , \quad \int_{\theta}^{T-h} \widetilde{M}(T - s)y(s)\,ds = 0 \quad \text{for all } \theta \in [-h, 0] \quad \text{and}\quad  y(t) = 0 \quad \text{for all } t \in (T - h, T]
\end{equation}

The first two conditions are standard in Delay and memory-type null controllability, ensuring that both the state and the accumulated memory vanish at time $T$. The third condition is necessary due to the presence of the delay term $y(t - h)$: to prevent this term from reactivating the system for $t > T$, the function $y(t)$ must vanish on the interval $[T - h, T]$.

\begin{remark}
    This hybrid condition generalizes the classical notion of null controllability. In the absence of the memory term ($M \equiv 0$), only the delay condition remains. Conversely, if $A_1 = 0$ (no delay), only the memory condition is required.
\end{remark}

To address this, we introduce the concept of Delay and memory-type null controllability.
\begin{definition}\label{def:Def_memory_type of null control}
Let $\widetilde{M}(\cdot) \in L^1(0,T; \mathcal{L}(Y))$ be a given memory kernel, not necessarily identical to $M(\cdot)$ in \eqref{eq:Intro_eq_7}. The system \eqref{eq:Intro_eq_7} is said to be \textbf{Delay and memory-type null controllable} (with respect to $\widetilde{M}(\cdot)$) if for any $y_0 \in Y$, there exists a control $u(\cdot) \in L^2(0,T;U)$ such that the corresponding mild solution $y(\cdot)$ satisfies:
\begin{equation}\label{eq:Intro_eq_9}
\begin{cases}
   (a)\quad y(T) = 0, \\[6pt]
   (b)\quad \displaystyle\int_{\theta}^{T-h} \widetilde{M}(T - s)y(s)\,ds = 0 \quad \text{for all } \theta \in [-h, 0], \\[6pt]
    (c)\quad y(t) = 0 \quad \text{for all } t \in (T - h, T].
\end{cases}
\end{equation}

\end{definition}

The standard notion of (partial) null controllability, as characterized by (\ref{eq:Intro_eq_3}), can be viewed as a special instance of Delay and memory-type null controllability by choosing $\widetilde{M}(\cdot) \equiv 0$. Full controllability, on the other hand, corresponds to the case where $\widetilde{M} = M$.

The central goal of this work is to analyze the Delay and memory-type null controllability of system \eqref{eq:Intro_eq_7}. By applying standard duality techniques, the controllability question is transformed into an observability problem for the corresponding adjoint system (see Proposition \ref{prop:2.1}). While observability estimates are well developed for classical parabolic systems, the presence of memory terms introduces substantial challenges, which have not yet been fully addressed in the literature. Hence, we focus on this class of systems. Even in the linear setting, establishing Delay and memory-type null controllability necessitates the use of controls supported on a proper subregion $\omega \subset \Omega$, implying that $B(\cdot)$ is not surjective (see Remark \ref{rem:2.2} for more details).

\begin{remark}
\label{rem:1.2}
Unlike classical frameworks that explore trajectory controllability—requiring the vanishing of the memory term at the final time—this paper focuses on  Delay and memory-type null controllability, which demands both $y(T) = 0$ and the nullification of the associated memory integral. In linear systems, this weaker notion is sufficient to characterize the full trajectory controllability. Accordingly, our analysis will be centered around the concept of Delay and memory-type null controllability.
\end{remark}

\section{Abstract Duality and Observability Analogue}

In this section, we address the concept of Delay and memory-type null controllability associated with a memory kernel denoted by $\widetilde{M}(\cdot)$.\\

We begin with the consideration of the following system:
\begin{equation}
\label{eq:Dual_1}
\left\{
\begin{aligned}
    y'(t) &= Ay(t) + A_1 y(t - h), && 0 \leq t \leq T, \\
    y_0(\theta) &= \varphi(\theta), && -h \leq \theta \leq 0.
\end{aligned}
\right.
\end{equation}

It is well-established that, for any initial function $\varphi \in C$, the system admits a unique solution $y(t)$. Let us now define an operator $S(t)$ acting on the space $Y$ by:
\[
S(t)\varphi(0) =
\begin{cases}
y(t), & \text{if } t \geq 0, \\
0, & \text{if } t < 0.
\end{cases}
\]

This operator $S(t): Y \to Y$ is referred to as the fundamental solution corresponding to system \eqref{eq:Dual_1}. It satisfies the following integral representation:
\begin{align}\label{eq:Dual_2}
    S(t) &= T(t) + \int_0^t T(t - s) A_1 S(s - h) \, ds, && t > 0, \notag \\
    S(0) &= I, \notag \\
    S(t) &= 0, && -h \leq t < 0.
\end{align}

By applying Gronwall's inequality, one can conclude the existence of a positive constant $M_S$ such that
\[
\|S(t)\| \leq M_S, \quad \text{for all } t \in [0, \tau].
\]

The unique mild solution $y(\cdot) \in C([-h, T]; Y)$ of equation \eqref{eq:Intro_eq_7} can be represented by the integral formulation given in \cite{wang2005approximate}:
\begin{align}\label{eq:Dual_3}
    y(t) &= S(t)\varphi(0) 
    + \int_0^t S(t - s) B u(s) \, ds + \int_0^t S(t - s) \int_0^s M(t - \tau)y(\tau) \, d\tau \, ds, \quad 0 \leq t \leq \tau, \notag \\
    y_0(\theta) &= \varphi(\theta), \quad -h \leq \theta \leq 0.
\end{align}

We now examine the adjoint system associated with \eqref{eq:Intro_eq_7}, involving the memory kernel $\widetilde{M}(\cdot)$:\footnotemark

\begin{equation}
\label{eq:dual_eq_1}
\left\{
\begin{aligned}
    -w_t(t) &= A^* w(t) + A_1^* w(t + h) + \int_t^T M(s - t)^* w(s)\,ds - \widetilde{M}(T - t)^* z_T, \quad t \in [0, T], \\
    w(T) &= w_T, \\
    w(t) &= 0, \quad t \in (T, T + h].
\end{aligned}
\right.
\end{equation}

where $w_T, z_T \in Y$.

Throughout the following, we let $K$ denote a generic positive constant, which may differ from line to line unless specified otherwise. We now present the main result.

\begin{proposition}
\label{prop:2.1}
The system described by \eqref{eq:Intro_eq_7} is Delay and memory-type null controllable (with the kernel $\widetilde{M}(\cdot)$) if and only if every solution to \eqref{eq:dual_eq_1} satisfies the observability inequality:
\begin{equation}
\label{eq:enhanced_obs}
\|w(\theta)\|_Y^2 + \int_{ - h}^{\theta} \|w(t+h)\|_Y^2 dt+ \int_{T_1 - h}^{T-h} \|w(t+h)\|_Y^2 dt \leq K \int_0^T \|B(t)^* w(t)\|_U^2 dt,
\end{equation}
for all $w_T, z_T \in Y$ and $T_1 \in [T - h, T]$
\end{proposition}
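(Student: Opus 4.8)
The plan is to prove the equivalence by the classical control–observability duality, realized abstractly as an operator range inclusion. First I would fix the initial datum and use the integral representation \eqref{eq:Dual_3} to write each of the three terminal quantities in \eqref{eq:Intro_eq_9} as an affine function of the control. These quantities live naturally in a product Hilbert space $Z := Y \times L^2(-h,0;Y) \times L^2(T-h,T;Y)$, whose factors record, respectively, the value $y(T)$ (condition (a)), the parametrized family $\theta \mapsto \int_\theta^{T-h}\widetilde M(T-s)y(s)\,ds$ (condition (b)), and the restriction $y|_{(T-h,T]}$ (condition (c)). Writing the terminal-observable map as $u \mapsto \mathcal L u + \mathcal G\varphi$, with $\mathcal L \in \mathcal L\big(L^2(0,T;U),Z\big)$ the reachability operator and $\mathcal G$ the contribution of the free dynamics from the initial datum, Delay and memory-type null controllability is exactly the requirement that $-\mathcal G\varphi \in \operatorname{Ran}(\mathcal L)$ for every admissible $\varphi$, i.e. $\operatorname{Ran}(\mathcal G) \subseteq \operatorname{Ran}(\mathcal L)$. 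The uniform bound $\|S(t)\|\le M_S$ together with the $L^1$/$L^2$ regularity of $M(\cdot)$, $\widetilde M(\cdot)$ and $B(\cdot)$ assumed in \eqref{eq:Intro_eq_7} guarantees that both $\mathcal L$ and $\mathcal G$ are bounded.

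Next I would invoke the standard operator range inclusion lemma (Douglas's theorem) for bounded operators between Hilbert spaces: $\operatorname{Ran}(\mathcal G) \subseteq \operatorname{Ran}(\mathcal L)$ holds if and only if there is a constant $K>0$ with $\|\mathcal G^*\zeta\|^2 \le K\,\|\mathcal L^*\zeta\|^2$ for every $\zeta \in Z$. Thus the proposition reduces to computing the two adjoints and recognizing them inside \eqref{eq:enhanced_obs}: I must show that $\|\mathcal L^*\zeta\|^2$ equals the right-hand side $\int_0^T \|B(t)^* w(t)\|_U^2\,dt$, and that $\|\mathcal G^*\zeta\|^2$ reproduces the left-hand side, where $w$ is the solution of \eqref{eq:dual_eq_1} associated with $\zeta$.

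The crux is this adjoint computation. I would pair $\mathcal L u$ and $\mathcal G \varphi$ against a test triple $\zeta = (w_T,\mu,\eta) \in Z$ and integrate by parts in time, using Fubini to transpose the two Volterra-type operators. The memory operator $\int_0^t M(t-s)y(s)\,ds$ transposes, after swapping the order of integration, into $\int_t^T M(s-t)^* w(s)\,ds$; the delay operator $A_1 y(\cdot-h)$ transposes, after the shift $s \mapsto s+h$ combined with the convention $w \equiv 0$ on $(T,T+h]$, into $A_1^* w(\cdot+h)$; and the dual variable $\mu$ carried by condition (b) enters the backward equation precisely as the source $-\widetilde M(T-t)^* z_T$. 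Carrying this through identifies the backward dynamics as \eqref{eq:dual_eq_1} with terminal datum $w_T$, yields $\mathcal L^*\zeta = B(\cdot)^* w(\cdot)$, and, on the $\mathcal G^*$ side, produces exactly the traces $\|w(\theta)\|_Y^2$, $\int_{-h}^{\theta}\|w(t+h)\|_Y^2\,dt$ and $\int_{T_1-h}^{T-h}\|w(t+h)\|_Y^2\,dt$ carried by the free evolution, uniformly over the parameters $\theta \in [-h,0]$ and $T_1 \in [T-h,T]$ inherited from the ``for all'' clauses in (b) and (c). Combining this identification with the range inclusion lemma gives the asserted equivalence.

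I expect the main obstacle to be the bookkeeping in this transposition, in three respects. First, the delay shift must be handled carefully at the temporal endpoints so that the history interval $[-h,0]$ and the artificial extension $(T,T+h]$ match the terms $w(t+h)$ and the boundary condition $w\equiv 0$ on $(T,T+h]$ in \eqref{eq:dual_eq_1}. Second, condition (b) is a parametrized family of constraints, so its dual variable is a function of $\theta$; one must verify that this dependence collapses into exactly the single source term $\widetilde M(T-t)^* z_T$ together with the mixed memory/delay trace on $[T_1-h,T-h]$, and justify reducing the $L^2$-in-parameter estimate to the fixed-parameter form stated in \eqref{eq:enhanced_obs} (by testing against dual variables concentrated at a parameter value and a density argument). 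Third, one must confirm measurability and boundedness of $\mathcal L$ and $\mathcal G$ into $Z$ so that Douglas's lemma applies. Once the adjoints are correctly matched, the equivalence is immediate.
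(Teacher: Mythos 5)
Your route is genuinely different from the paper's: the paper runs a HUM-style argument (a linear functional on $\{B^*w\}$ extended by Hahn--Banach and represented by Riesz to produce the control, plus a normalization/contradiction argument for necessity), whereas you propose the operator-theoretic route via Douglas's range-inclusion lemma. That strategy is legitimate in principle, but as written it has two genuine gaps, both located exactly where you claim the adjoints ``reproduce exactly'' the two sides of \eqref{eq:enhanced_obs}.

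First, $\|\mathcal{G}^*\zeta\|^2$ is \emph{not} the left-hand side of \eqref{eq:enhanced_obs}. The initial history influences the dynamics only through the delay operator, so transposition pairs $\varphi$ against $A_1^*w(\cdot+h)$, not against $w(\cdot+h)$; this is visible in the paper's own duality computation, where the functional $\Lambda$ and the identity \eqref{eq:eq_Obs_ineq_2} carry $\langle \varphi(t), A_1^*w(t+h)\rangle_Y$. Hence Douglas's lemma characterizes null controllability by an inequality whose left side involves $\int \|A_1^*w(t+h)\|_Y^2\,dt$ rather than $\int \|w(t+h)\|_Y^2\,dt$. Since $A_1$ is merely bounded, that inequality is \emph{weaker} than \eqref{eq:enhanced_obs}: your argument still yields the sufficiency direction (the stated estimate dominates the Douglas one), but the necessity direction --- null controllability implies \eqref{eq:enhanced_obs} with the bare norms of $w$ --- does not follow unless $A_1^*$ is bounded below, which is not assumed.

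Second, the dual variable attached to condition (c) does not collapse the way you hope. In your space $Z$ it is a function $\eta\in L^2(T-h,T;Y)$, and transposition inserts $\eta$ into the backward equation as a distributed source on $(T-h,T)$; but \eqref{eq:dual_eq_1} contains no such source, and the hypothesis \eqref{eq:enhanced_obs} is quantified only over data $(w_T,z_T)$ together with the scalar parameter $T_1$. For the sufficiency direction of Douglas you need $\|\mathcal{G}^*\zeta\|\le K\|\mathcal{L}^*\zeta\|$ for \emph{every} $\zeta$, in particular for every nonzero $\eta$, and \eqref{eq:enhanced_obs} says nothing about those adjoint solutions; in the opposite direction, your ``concentration at a parameter value'' device requires Dirac masses, which are not admissible elements of $L^2(T-h,T;Y)$. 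This reduction is precisely the point where the paper instead evaluates the duality identity at variable terminal times $T_1\in[T-h,T]$, so it is not bookkeeping --- it is the missing core of the argument. (A smaller issue of the same kind: $\varphi$ lives in the Banach space $C([-h,0];Y)$, while Douglas's lemma needs both domains Hilbert; you would have to pass to $Y\times L^2(-h,0;Y)$ and justify that null controllability over the two classes of initial histories is the same requirement, since range inclusions are not preserved under dense restriction.)
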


\begin{proof}
We prove both directions.

\textbf{$(\Rightarrow)$ Sufficiency:} Suppose the observability estimate \eqref{eq:enhanced_obs} holds. Define the set
\[
\mathcal{L} := \left\{ B^*(\cdot)w(\cdot) \in L^2(0, T; U) \mid w(\cdot) \text{ solves } \eqref{eq:dual_eq_1} \right\}.
\]
Define the linear functional $\Lambda : \mathcal{L} \to \mathbb{R}$ by
\[
\Lambda(B^*w) := -\langle w(\theta), \varphi(\theta) \rangle_Y - \int_{- h}^{\theta} \langle A_1^*w(t+h), \varphi(t) \rangle_Y dt + \int_{T_1 - h}^{T-h} \langle A_1^* w(t+h), y(t) \rangle_Y dt,
\]
where $\varphi \in C([-h, 0]; Y)$ is the initial history of the state equation. By the observability inequality, $\Lambda$ is bounded on $\mathcal{L}$ and thus extends to all of $L^2(0, T; U)$ by the Hahn--Banach theorem. Then, by Riesz representation, there exists $u \in L^2(0, T; U)$ such that
\begin{equation}\label{eq:eq_Obs_ineq_3}
    \int_0^T \langle B(t)^* w(t), u(t) \rangle_U dt = \Lambda(B^* w), \quad \forall w_T, z_T \in Y.
\end{equation}

Indeed, for any $w_T,z_T \in Y,$ by \eqref{eq:Intro_eq_7} and (\ref{eq:dual_eq_1}), for any $T_1 \in [T-h,T]$ and $\theta \in [-h,0]$ , we have
\begin{align}\label{eq:eq_Obs_ineq_1}
    &\langle y(T_1), w(T_1) \rangle_Y - \langle w(\theta), \varphi(\theta) \rangle_Y \notag \\
&= \int_{\theta}^{T_1} \left\langle \frac{d}{dt} y(t), w(t) \right\rangle_Y \, dt + \int_{\theta}^{T_1} \left\langle y(t), \frac{d}{dt} w(t) \right\rangle_Y \, dt \notag \\
&= \int_{\theta}^{T_1} \left\langle Ay(t) + A_1y(t-h) + \int_{\theta}^{T_1} M(t-s)y(s)\,ds + B(t)u(t), w(t) \right\rangle_Y \, dt \notag \\
&\quad + \int_{\theta}^{T_1} \left\langle y(t), -A^*w(t) - A_1^*w(t+h) - \int_t^T M(s - t)^*w(s)\,ds + \widetilde{M}(T - t)^*z_T \right\rangle_Y \, dt \notag \\
&= \int_{\theta}^{T_1}\left\langle A_1 y(t-h),w(t)\right\rangle_Y dt -\int_{\theta}^{T_1}\left\langle y(t),A_1^*w(t+h)\right\rangle_Y dt \notag\\
& \quad +\left\langle \int_{\theta}^{T_1}\widetilde{M}(T - t)y(t), z_T  \right\rangle_Y dt + \int_{\theta}^{T_1}\left\langle B(t)u(t),w(t)  \right\rangle_U dt \notag\\
&=I_1 +I_2 +I_3+I_4,
\end{align}
where,
\begin{align*}
    &I_1=\int_{\theta}^{T_1}\left\langle A_1 y(t-h),w(t)\right\rangle_Y dt,\\
   & I_2=  -\int_{\theta}^{T_1}\left\langle y(t),A_1^*w(t+h)\right\rangle_Y dt,\\
   &I_3=\int_{\theta}^{T_1}\left\langle y(t), \widetilde{M}(T - t)^*z_T  \right\rangle_Y dt,\\
   &I_4=\int_{\theta}^{T_1}\left\langle B(t)u(t),w(t)  \right\rangle_U dt.
\end{align*}
Now,

 \begin{align*}
     I_1+I_2&= \int_{\theta}^{T_1}\left\langle A_1 y(t-h),w(t)\right\rangle_Y dt -\int_{\theta}^{T_1}\left\langle y(t),A_1^*w(t+h)\right\rangle_Y dt,
 \end{align*}

put $s=t-h$ in $(I_1)$, then
$$I_1+I_2= \int_{\theta-h}^{T_1-h}\left\langle A_1 y(s),w(s+h)\right\rangle_Y ds -\int_{\theta}^{T_1}\left\langle y(t),A_1^*w(t+h)\right\rangle_Y dt,$$
from (\ref{eq:dual_eq_1})
\begin{align}\label{eq:eq_Obs_ineq_2}
    I_1+I_2&= \int_{\theta-h}^{T_1-h}\left\langle  y(t),A_1^*w(t+h)\right\rangle_Y dt -\int_{\theta}^{T_1}\left\langle y(t),A_1^*w(t+h)\right\rangle_Y dt \notag\\
    &=\int_{\theta-h}^{\theta}\left\langle  y(t),A_1^*w(t+h)\right\rangle_Y dt- \int_{T_1-h}^{T_1}\left\langle  y(t),A_1^*w(t+h)\right\rangle_Y dt \notag \\
    &=\int_{\theta-h}^{\theta}\left\langle  y(t),A_1^*w(t+h)\right\rangle_Y dt- \int_{T_1-h}^{T-h}\left\langle  y(t),A_1^*w(t+h)\right\rangle_Y dt
\end{align}

 Combining \eqref{eq:eq_Obs_ineq_1}, \eqref{eq:eq_Obs_ineq_2}, and \eqref{eq:eq_Obs_ineq_3}, we deduce
\[
\langle y(T_1), w(T_1) \rangle_Y - \int_{\theta}^{T_1} \langle \widetilde{M}(T - t)y(t), z_T \rangle_Y dt = 0, \quad \forall w_T, z_T \in Y.
\]
This implies:
\[
y(T_1) = 0, \quad \int_{\theta}^{T-h} \widetilde{M}(T - t)y(t) dt = 0,
\]
for all $T_1 \in [T-h, T]$.

That is,

\[
y(t) = 0, \quad \int_{\theta}^{T-h} \widetilde{M}(T - t)y(t) dt = 0,
\]
for all $t \in [T-h, T]$, as desired.

\textbf{$(\Leftarrow)$ Necessity:} Suppose the system is Delay and memory-type null controllable. Assume, for contradiction, that the observability inequality \eqref{eq:enhanced_obs} fails. Then, there exists a sequence $(w_T^n, z_T^n) \in Y \times Y$ with corresponding adjoint solutions $w^n(t)$ such that
\[
\|w(\theta)\|_Y^2 + \int_{ - h}^{\theta} \|w(t+h)\|_Y^2 dt+ \int_{T_1 - h}^{T-h} \|w(t+h)\|_Y^2 dt = 1, \quad \int_0^T \|B(t)^* w^n(t)\|_U^2 dt < \frac{1}{n}.
\]
Define the linear functional:
\[
F_n := \langle w^n(\theta), \varphi(\theta) \rangle_Y + \int_{- h}^{\theta} \langle w^n(t+h), \varphi(t) \rangle_Y dt + \int_{T_1- h}^{T-h} \langle w^n(t+h), y(t) \rangle_Y dt.
\]

By null controllability, there exists a control $u \in L^2(0, T; U)$ such that the solution $y(\cdot)$ satisfies  $\int_{\theta}^{T-h} \widetilde{M}(T - s)y(s) ds = 0$, and $y(t) = 0$ on $[T - h, T]$. Using the duality formula again, we find
\[
F_n = -\int_0^T \langle B(t)^* w^n(t), u(t) \rangle_U dt \to 0 \quad \text{as } n \to \infty.
\]

But since $\|w(\theta)\|_Y^2 + \int_{ - h}^{\theta} \|w(t)\|_Y^2 dt+ \int_{T_1 - h}^{T-h} \|w(t)\|_Y^2 dt = 1$, this implies $F_n \not\to 0$ unless $\varphi \quad \textit{and} \quad y \equiv 0$, contradicting the assumption. Hence, the observability estimate \eqref{eq:enhanced_obs} must hold.

\end{proof}

\begin{remark}
\label{rem:2.2}
The concept of Delay and memory-type null controllability can be naturally reinterpreted through the lens of a nonstandard unique continuation property, which is closely related to the observability inequality \eqref{eq:enhanced_obs}. However, realizing such a property becomes notably challenging when the control operator \( B \) does not depend on time—except in the straightforward case where \( B: U \to Y \) is surjective. Notably, the analysis in \cite{guerrero2013remarks} employs spectral techniques to demonstrate that the observability inequality may fail for heat equations with memory effects, particularly when the control support remains static over time.
\end{remark}

This practical limitation underscores the necessity of allowing the control operator \( B \) to vary with time in the context of Delay and memory-type controllability. Such a time-dependent formulation is especially appropriate in the study of partial differential equations (PDEs), where \( B \) is responsible for localizing control influence in both space and time. A typical and effective approach—motivated by models in viscoelasticity (see \cite{chaves2014null})—is to consider moving control regions, which help in satisfying the strong observability condition stated in \eqref{eq:enhanced_obs}.

\begin{remark}
\label{rem:2.3}
The observability inequality in \eqref{eq:enhanced_obs} is crucial not only for characterizing Delay and memory-type null controllability but also for deriving constructive strategies to compute controls, as discussed in \cite{zuazua2007controllability} for PDE systems. Provided that the inequality holds, consider the following quadratic cost functional defined on the terminal data of the adjoint system \eqref{eq:dual_eq_1}:
\end{remark}

\begin{equation}
J(w_T, z_T) = \frac{1}{2} \int_0^T \|B^*(s)v(s)\|_{\mathcal{U}}^2 \, ds + \langle w(0), y_0 \rangle_Y.
\end{equation}

The functional \( J \), defined for terminal states \( (w_T, z_T) \in Y \times Y \), is continuous and convex on that space. Suppose \( J \) attains its minimum at a point \( (w_T^*, z_T^*) \). Then the corresponding control \( u = B^*v \), where \( v \) is the solution to the adjoint system with this optimal terminal data, provides the control input that fulfills the desired condition in \eqref{eq:Intro_eq_9}.

Nevertheless, the existence of a minimizer for \( J \) is not guaranteed a priori. The observability inequality \eqref{eq:enhanced_obs} is rather weak in the sense that it bounds only the initial state \( u(0) \) in \( Y \), without offering control over \( w_T \) or \( z_T \). Therefore, to ensure the well-posedness of the minimization problem, it becomes necessary to consider the completion of \( Y \times Y \) under the norm induced by the quadratic form.

\begin{equation}
\left( \int_0^T \| B^*(t)v(t) \|_{\mathcal{U}}^2 \, dt \right)^{1/2}.
\end{equation}

It is important to note that verifying whether the seminorm above actually defines a norm is a non-trivial matter. This issue has been thoroughly explored in the context of partial differential equations (PDEs); see \cite{zuazua2007controllability} for details. In the case of the wave equation without memory effects, the associated adjoint system \eqref{eq:dual_eq_1} with \( z_T = 0 \) becomes conservative. Consequently, estimating the norm of \( u(0) \) in the space \( Y \) also leads to an observability estimate for \( w_T \). This observability inequality is then instrumental in minimizing the functional \( J \), which, in this case, no longer depends on \( z_T \), over \( w_T \in Y \).

When memory terms are present, the analysis becomes more delicate. An estimate for \( u(0) \) in \( Y \) does not necessarily provide a bound for \( u \) in the same space. Nevertheless, the backward uniqueness properties of parabolic equations enable us to identify suitable weighted spaces in which the functional \( J \) becomes coercive. This is crucial for ensuring the existence of a minimizer with respect to \( w_T \) within that space.

In the framework of null controllability for systems with memory, such an abstract observability-based formulation of the controllability condition is essential. It serves as a foundation for variational approaches to control construction via functional minimization.

Moreover, this type of characterization finds application in various settings, particularly for developing effective numerical schemes using gradient descent methods.

\section{The Finite-Dimensional Case}
In this section, we study a controlled ordinary differential equation that incorporates a memory term:

\begin{equation}\label{eq:Finite_eq_1}
\left\{
\begin{aligned}
y_t &= Ay(t)+A_1 y(t-h) + \int_0^t M(t-s)y(s)\,ds + Bu(t), \quad t \in (0,T], \\
y(\theta) &= \varphi (\theta), \quad \theta \in [-h, 0].
\end{aligned}
\right.
\end{equation}

Here, \( y = y(t) \) denotes the state vector in \( \mathbb{R}^n \); the matrices \( A, A_1 \in \mathbb{R}^{n \times n} \), and \( M(\cdot) \in L^1(0,T;\mathbb{R}^{n \times n}) \) is the memory kernel. The control input \( u(t) \) takes values in \( \mathbb{R}^m \) for some \( m \in \mathbb{N} \), and \( B \in \mathbb{R}^{n \times m} \) is the control operator. For any matrix \( K \in \mathbb{R}^{n \times m} \), we denote its transpose by \( K^T \).

Building on the formulation introduced in the previous section, we now consider the associated adjoint system corresponding to the memory kernel \( M(\cdot) \in L^1(0,T; \mathbb{R}^{n \times n}) \):

\begin{equation}
\label{eq:Finite_eq_2}
\left\{
\begin{aligned}
    w_t(t) &= -A^* w(t) - A_1^* w(t + h) - \int_t^T M(s - t)^* w(s)\,ds +\widetilde{M}(T - t)^* z_T, \quad t \in [0, T], \\
    w(T) &= w_T, \\
    w(t) &= 0, \quad t \in (T, T + h].
\end{aligned}
\right.
\end{equation}

where \( w_T, x_T \in \mathbb{R}^n \).

We now state the following result:

\begin{theorem}\label{th:Finite_the_1}
    \begin{itemize}
    \item[(i)] Suppose that \( M(\cdot), \widetilde{M}(\cdot) \in L^1(0,T;\mathbb{R}^{n \times n}) \). Then for any solution \( w \) of \eqref{eq:Finite_eq_2}, the condition
    \begin{equation}\label{eq:Finite_eq_3}
         B^T w = 0 \text{ in } [0,T] \quad \Rightarrow \quad x_T = \widetilde{M}(t)^T x_T = 0 \quad \text{for almost every } t \in [0,T]
    \end{equation}
    ensures that the system \eqref{eq:Finite_eq_1} is Delay and memory-type null controllable.

    \item[(ii)] Assume \( M(\cdot) = G M'(\cdot) \) and \( \widetilde{M}(\cdot) = \widetilde{G} M'(\cdot) \) for some constant matrices \( G, \widetilde{G} \in \mathbb{R}^{n \times n} \). If the system \eqref{eq:Finite_eq_1} is Delay and memory-type null controllable, then for every solution of \eqref{eq:Finite_eq_2}, it holds that
     \begin{equation}\label{eq:Finite_eq_4}
         B^T w = 0 \text{ in } [0,T] \quad \Rightarrow \quad \widetilde{M}(t)^T x_T = 0 \quad \text{for almost every } t \in [0,T].
    \end{equation}
    \end{itemize}
\end{theorem}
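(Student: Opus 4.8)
The plan is to route both implications through the duality of Proposition~\ref{prop:2.1}, which equates Delay and memory-type null controllability of \eqref{eq:Finite_eq_1} with the observability inequality \eqref{eq:enhanced_obs} for the adjoint \eqref{eq:Finite_eq_2}. What makes the finite-dimensional case tractable is that the terminal data, which I write $(w_T, x_T)$ (the source datum $x_T$ being the $z_T$ of \eqref{eq:Finite_eq_2}), range over $\mathbb{R}^n \times \mathbb{R}^n$, so the observation map $(w_T, x_T) \mapsto B^T w(\cdot)|_{[0,T]}$ is continuous and linear, and both sides of \eqref{eq:enhanced_obs} are continuous quadratic forms of $(w_T, x_T)$. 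Hence \eqref{eq:enhanced_obs} holds if and only if the associated kernel inclusion holds, i.e.\ if and only if $B^T w \equiv 0$ on $[0,T]$ forces the whole left-hand side of \eqref{eq:enhanced_obs} to vanish. First I would establish this equivalence by the standard normalization/compactness argument: were the inequality to fail, pick data with left-hand side equal to $1$ and right-hand side tending to $0$, extract a convergent subsequence on the compact unit sphere, and pass to the limit to obtain nontrivial data lying in the kernel of the observation but not of the left-hand form. Everything then reduces to a unique continuation statement for \eqref{eq:Finite_eq_2}.

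For (i), I would take any adjoint solution with $B^T w \equiv 0$ on $[0,T]$. By hypothesis \eqref{eq:Finite_eq_3} this yields $x_T = 0$, which annihilates the forcing term $\widetilde{M}(T-t)^T x_T$ and turns \eqref{eq:Finite_eq_2} into the homogeneous backward delay integro-differential equation with terminal data $w_T$ and the trivial extension $w \equiv 0$ on $(T, T+h]$. It then remains to show that the surviving observation $B^T w \equiv 0$ propagates nullity to the quantities on the left of \eqref{eq:enhanced_obs}. I would do this window by window, solving the equation backward on the successive delay intervals $[T-(k+1)h,\,T-kh]$: on each window the delayed and memory terms are already known from the later windows, and $B^T w \equiv 0$ together with the local unique continuation of the homogeneous dynamics (a rank-type condition in finite dimensions) forces $w$ to vanish there. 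Collecting the windows makes the left-hand side of \eqref{eq:enhanced_obs} zero, which by the equivalence above yields \eqref{eq:enhanced_obs} and hence controllability.

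For (ii) the implication runs the other way and the factorization $M = G M'$, $\widetilde{M} = \widetilde{G} M'$ is what carries the argument. Assuming controllability, Proposition~\ref{prop:2.1} again gives \eqref{eq:enhanced_obs}, hence the unique continuation direction. I would take an adjoint solution with $B^T w \equiv 0$ and insert it into the bilinear identity obtained in the proof of Proposition~\ref{prop:2.1}; since $\int_0^T \langle B u, w \rangle = \int_0^T \langle u, B^T w \rangle = 0$ for every admissible control $u$, that identity collapses to a moment condition $\int_{\theta}^{T-h} \langle \widetilde{M}(T-t) y(t), x_T \rangle \, dt = 0$ holding for every state $y$ produced under controllability. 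Writing $\widetilde{M}(T-t)^T x_T = M'(T-t)^T \widetilde{G}^T x_T$, the common profile $M'$ lets me decouple the memory contribution from the delayed dynamics and test the moment condition against a sufficiently rich family of reachable trajectories $y$, from which $M'(T-t)^T \widetilde{G}^T x_T = 0$, i.e.\ $\widetilde{M}(t)^T x_T = 0$ for a.e.\ $t$, follows.

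The step I expect to be the main obstacle is this last one in (ii): promoting the integrated moment condition to the pointwise a.e.\ identity $\widetilde{M}(t)^T x_T = 0$. It requires the reachable set of \eqref{eq:Finite_eq_1} to be rich enough to separate the time profile $M'$, and it is precisely here that the factorization hypothesis is indispensable---without a shared profile, the memory term cannot be isolated from the delayed state $A_1 y(t-h)$, and the two contributions remain entangled in the duality identity. Two further technical points deserve care: the bookkeeping of the shifts $t \mapsto t \pm h$ and the terminal condition $w \equiv 0$ on $(T, T+h]$ when propagating nullity window-by-window in (i), and the fact that, since $M, \widetilde{M} \in L^1$, all kernel-dependent identities must be read almost everywhere rather than pointwise, so the compactness and moment arguments should be framed in $L^2(0,T)$ from the outset.
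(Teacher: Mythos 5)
Your global reduction---routing both parts through Proposition~\ref{prop:2.1} and observing that, in finite dimensions, the inequality \eqref{eq:enhanced_obs} is equivalent to a kernel inclusion between two continuous quadratic forms of the terminal data $(w_T,x_T)$---is sound and matches the paper (its Steps 1--4 are your compactness argument; its Step 5 ``energy estimate'' is exactly the continuity of the data-to-solution map that makes the quadratic-form equivalence work). The genuine failure is in how you verify the kernel inclusion in part (i). From \eqref{eq:Finite_eq_3} you extract only $x_T=0$; this kills the forcing $\widetilde M(T-t)^{T}x_T$ but leaves a homogeneous backward system with an arbitrary surviving terminal datum $w_T$, and to annihilate the left side of \eqref{eq:enhanced_obs} you still need $w\equiv0$ near $t=0$ and on $[T-h,T]$. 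You obtain this by invoking ``local unique continuation of the homogeneous dynamics (a rank-type condition in finite dimensions)''---but no observability rank condition on $(A,A_1;B)$ is among the hypotheses, and it cannot be conjured: take $n=2$, $A=A_1=0$, $M=0$, $B=(1,0)^{T}$, and $\widetilde M(t)$ with first column $(1,t)^{T}$ and second column zero. Then every adjoint solution with $B^{T}w\equiv0$ does force $x_T=0$, yet the state equation reduces to $y_t=Bu$, whose second component is frozen, so \eqref{eq:Finite_eq_1} is not null controllable in any sense; under your literal reading of \eqref{eq:Finite_eq_3} the implication you are proving is false. The paper's proof (its Step 1) reads the hypothesis as ``$B^{T}w\equiv0\Rightarrow w_T=0$ \emph{and} $\widetilde M(t)^{T}z_T=0$ a.e.'' (the first $x_T$ in \eqref{eq:Finite_eq_3} is evidently a typo for $w_T$); with both data gone, $w\equiv0$ follows from plain backward uniqueness (Gronwall for the Volterra/delay structure), and no rank condition is needed.

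In part (ii) your argument does not close, and you flag the gap yourself. If the moment identity $\int_{\theta}^{T-h}\langle\widetilde M(T-t)y(t),x_T\rangle\,dt=0$ is tested on trajectories that have been steered to rest, it is vacuous: condition (b) of \eqref{eq:Intro_eq_9} states precisely that $\int_{\theta}^{T-h}\widetilde M(T-t)y(t)\,dt=0$, so the pairing vanishes for \emph{every} $x_T$ and carries no information. If instead you test on arbitrary trajectories, then writing $y(t)=\int_0^tR(t,s)Bu(s)\,ds$ and exchanging the order of integration shows that the moment condition for all controls $u$ is equivalent to $B^{T}\tilde w\equiv0$ for the adjoint solution $\tilde w$ with data $(0,x_T)$---i.e.\ you have reproduced an instance of the unique continuation statement \eqref{eq:Finite_eq_4} that you set out to prove, so the ``sufficient richness'' of reachable trajectories is not an auxiliary fact but the theorem itself. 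Moreover, the factorization $M=GM'$, $\widetilde M=\widetilde GM'$ plays no identifiable role in your scheme, whereas it is the engine of the paper's argument: there one first gets $w(\theta)=0$ from \eqref{eq:enhanced_obs}, then sets $\varphi=w_t$, uses the factorization to check that $\varphi$ solves an adjoint system of the same structure \eqref{eq:Finite_eq_8}, applies the observability estimate to $\varphi$ to obtain $w_t(\theta)=0$, iterates to kill every time derivative of $w$ at $\theta$, and concludes $w\equiv0$ by time-analyticity, after which the equation itself yields $\widetilde M(T-t)^{T}z_T\equiv0$. That differentiation-plus-analyticity bootstrap is the missing idea in your proposal.
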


\begin{proof}
To prove (i), by  using (\ref{eq:Finite_eq_3}), we devide in multipal steps: \\
We aim to prove the inequality:
\begin{equation}\label{eq:obs}
|w_T|^2 + \left( \int_0^T |\widetilde{M}(t)^\top z_T|\, dt \right)^2 \leq C \int_0^T |B^\top w(t)|^2\, dt,
\end{equation}
under the assumption that for any solution $w(t)$ to the adjoint system,
\[
B^\top w(t) \equiv 0 \text{ on } [0, T] \Rightarrow w_T = 0 \text{ and } \widetilde{M}(t)^\top z_T = 0 \text{ a.e. } t \in [0, T].
\]

\textit{Step 1: Contradiction Hypothesis}

Assume, for contradiction, that inequality \eqref{eq:obs} fails. Then for every integer $k \in \mathbb{N}$, there exist terminal data $(w_T^k, z_T^k) \in \mathbb{R}^n \times \mathbb{R}^n$ and corresponding solution $w^k(t)$ of the adjoint system such that:
\[
\int_0^T |B^\top w^k(t)|^2 dt < \frac{1}{k^2} \left( |w_T^k|^2 + \left( \int_0^T |\widetilde{M}(t)^\top z_T^k| dt \right)^2 \right).
\]

\textit{Step 2: Normalize and Pass to the Limit}

Define normalization constants:
\[
N_k := \sqrt{ |w_T^k|^2 + \left( \int_0^T |\widetilde{M}(t)^\top z_T^k| dt \right)^2 } > 0,
\]
and set:
\[
\hat{w}_T^k = \frac{w_T^k}{N_k}, \quad \hat{z}_T^k = \frac{z_T^k}{N_k}.
\]

Let $\hat{w}^k(t)$ denote the solution to the adjoint system with terminal condition $(\hat{w}_T^k, \hat{z}_T^k)$. Then:
\[
\int_0^T |B^\top \hat{w}^k(t)|^2 dt < \frac{1}{k^2}, \quad \text{and} \quad |\hat{w}_T^k|^2 + \left( \int_0^T |\widetilde{M}(t)^\top \hat{z}_T^k| dt \right)^2 = 1.
\]

By compactness in finite dimensions, we may extract a weakly converging subsequence:
\[
\hat{w}_T^k \rightharpoonup w_T^*, \quad \hat{z}_T^k \rightharpoonup z_T^*.
\]

Moreover, the corresponding solutions satisfy:
\[
B^\top \hat{w}^k(t) \to 0 \text{ in } L^2(0,T) \Rightarrow B^\top w^*(t) = 0.
\]

\textit{Step 3: Use the Hypothesis}

By the assumption, $B^\top w^*(t) \equiv 0$ implies:
\[
w_T^* = 0, \quad \widetilde{M}(t)^\top z_T^* = 0 \text{ for all } t.
\]

Thus:
\[
|w_T^*|^2 + \left( \int_0^T |\widetilde{M}(t)^\top z_T^*| dt \right)^2 = 0.
\]

\textit{Step 4: Contradiction}

But by weak lower semi-continuity of the norm:
\[
\liminf_{k \to \infty} \left( |\hat{w}_T^k|^2 + \left( \int_0^T |\widetilde{M}(t)^\top \hat{z}_T^k| dt \right)^2 \right) \geq 1,
\]
contradicting the convergence to zero of the limit. Therefore, the inequality \eqref{eq:obs} must hold.\\

\textit{Step 5: Energy estimation }

We aim to prove the estimate:
\begin{equation}
\label{eq:energy_estimate}
\|w(\theta)\|_Y^2 + \int_{ - h}^{\theta} \|w(t+h)\|_Y^2 dt+ \int_{T_1 - h}^{T-h} \|w(t+h)\|_Y^2 dt \leq C \left[ |w_T|^2_{\mathbb{R}^n} + \left( \int_0^T \widetilde{M}(t)^\top z_T \, dt \right)^2 \right] \quad \forall w_T, z_T \in \mathbb{R}^n.
\end{equation}

We do not attempt a direct energy computation due to the presence of a future-integral memory term in the adjoint system:
\begin{equation}
\label{eq:adjoint}
\dot{w}(t) = -A^\top w(t) - \int_t^T M(s - t)^\top w(s) \, ds + \widetilde{M}(T - t)^\top z_T, \quad w(T) = w_T.
\end{equation}

Instead, we use the well-posedness and stability theory for linear integro-differential equations. From standard results on such systems (see, e.g., Dafermos or Lions), there exists a constant $C > 0$ such that the solution $w(\cdot)$ satisfies the stability estimate:
\[
\|w\|_{C([0, T]; \mathbb{R}^n)} \leq C \left( |w_T|_{\mathbb{R}^n} + \left\| \widetilde{M}(\cdot)^\top z_T \right\|_{L^1(0, T; \mathbb{R}^n)} \right).
\]

In particular, evaluating at $t = 0$ gives:
\[
|w(0)|_{\mathbb{R}^n} \leq C \left( |w_T|_{\mathbb{R}^n} + \int_0^T |\widetilde{M}(t)^\top z_T| \, dt \right).
\]

Applying the Cauchy--Schwarz inequality, we estimate:
\[
\left( \int_0^T |\widetilde{M}(t)^\top z_T| \, dt \right)^2 \leq T \int_0^T |\widetilde{M}(t)^\top z_T|^2 \, dt \leq T \| \widetilde{M} \|_{L^2(0, T)}^2 \cdot |z_T|^2.
\]

Combining the estimates and absorbing constants into $C$, we obtain:
\[
\|w(\theta)\|_Y^2 + \int_{ - h}^{\theta} \|w(t+h)\|_Y^2 dt+ \int_{T_1 - h}^{T-h} \|w(t+h)\|_Y^2 dt \leq C \left( |w_T|^2_{\mathbb{R}^n} + \left( \int_0^T \widetilde{M}(t)^\top z_T \, dt \right)^2 \right),
\]
which completes the proof of inequality \eqref{eq:energy_estimate}.
Combining (\ref{eq:obs}) and (\ref{eq:energy_estimate}), we show that
\begin{equation}\label{eq:Finite_eq_6}
    \|w(\theta)\|_Y^2 + \int_{ - h}^{\theta} \|w(t+h)\|_Y^2 dt+ \int_{T_1 - h}^{T-h} \|w(t+h)\|_Y^2 dt \leq C\int_0^T |B^{T}w(t)|_{\mathbb{R}^m}^2dt \quad \forall w_T,z_T \in \mathbb{R}^n
\end{equation}

Next, we address (ii). From Proposition \ref{prop:2.1} and the null controllability of (\ref{eq:Finite_eq_1}), we conclude that any solution of (\ref{eq:Finite_eq_2}) satisfies the energy bound in (\ref{eq:Finite_eq_6}). Thus, \( w(\theta) = 0 \). Since \( B^T w \equiv 0 \) on \( [0,T] \), define \( \varphi = w_t \). Referring to the first equation in (\ref{eq:Finite_eq_2}) and using \( M(\cdot) = G M'(\cdot) \), \( \widetilde{M}(\cdot) = \widetilde{G} M'(\cdot) \), we obtain:
\begin{align*}
\varphi_t &= -A^T \varphi(t)-A_1^T \varphi(t+h) + M(0)^T w + \int_t^T M(s - t)^T w(s)ds - \widetilde{M}(T - t)^T x_T \\
&= -A^T \varphi (t)-A_1^T \varphi(t+h) + M(0)^T w + \int_t^T M(s - t)^T w(s)ds - \widetilde{M}(T - t)^T x_T \\
&= -A^T \varphi(t)-A_1^T \varphi(t+h) - \int_t^T M(s - t)^T \varphi(s)ds + \widetilde{M}(T - t)^T (G^T w_T - \widetilde{G}^T x_T).
\end{align*}

Thus, \( \varphi \) satisfies:
\begin{equation}
\label{eq:Finite_eq_8}
\left\{
\begin{aligned}
\varphi_t &= -A^T \varphi(t)-A_1^T \varphi(t-h) - \int_t^T M(s - t)^T \varphi(s)ds + \widetilde{M}(T - t)^T (G^T w_T - \widetilde{G}^T x_T), \quad t \in [0, T), \\
\varphi(T) &= -A^T w_T-A^T w(T-h) + \widetilde{M}(0)^T z_T.
\end{aligned}
\right.
\end{equation}

Recognizing that (\ref{eq:Finite_eq_8}) shares the same structure as (\ref{eq:Finite_eq_2}), the energy estimate (\ref{eq:Finite_eq_6}) implies
\begin{equation}
\label{eq:Finite_eq_9}
 \|\varphi(\theta)\|_Y^2 + \int_{ - h}^{\theta} \|\varphi(t+h)\|_Y^2 dt+ \int_{T_1 - h}^{T-h} \|\varphi(t+h)\|_Y^2 dt \leq C \int_0^T |B^T \varphi(s)|^2 ds = C \int_0^T |B^T w_t(s)|^2 ds = 0.
\end{equation}

Consequently, \( w_t(\theta) = 0 \). Iterating this argument shows that all time derivatives vanish at \( t = \theta \), i.e., \( \frac{d^k w(t)}{dt^k}\big|_{t=\theta} = 0 \) for all \( k \in \mathbb{N}_0 \). Since \( w(t) \) is analytic in time, this leads to \( w(t) \equiv 0 \) on \( [0,T] \). Hence,
\[
w_T = \widetilde{M}(t)^T z_T = 0 \quad \text{for all } t \in [0, T].
\]
\end{proof}

As a direct implication of Theorem \ref{th:Finite_the_1}, we obtain the following corollary:
\begin{corollary}
    \textit{
Suppose that \( M(\cdot) = M \in \mathbb{R}^{n \times n} \), \( \widetilde{M}(\cdot) = \widetilde{M} \in \mathbb{R}^{n \times n} \), and there exists \( G, \widetilde{G} \in \mathbb{R}^{n \times n} \) such that \( M = G M' \) and \( \widetilde{M} = \widetilde{G} M' \). Then, system (\ref{eq:Finite_eq_1}) is Delay and memory-type null controllable with kernel \( M \) if and only if every solution to (\ref{eq:Finite_eq_2}) satisfying
\[
B^T w \equiv 0 \quad \text{on } [0,T]
\]
also satisfies
\[
w_T = \widetilde{M}^T z_T = 0.
\]
}
\end{corollary}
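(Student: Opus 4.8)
The plan is to obtain the equivalence by directly assembling the two implications already established in Theorem~\ref{th:Finite_the_1}, after verifying that the constant-kernel hypotheses place us squarely within the scope of both parts. First I would observe that constant matrices $M,\widetilde{M}$ trivially belong to $L^{1}(0,T;\mathbb{R}^{n\times n})$ on the bounded horizon $[0,T]$, so part~(i) is applicable without any extra regularity assumption, and that the factorization $M=GM'$, $\widetilde{M}=\widetilde{G}M'$ is precisely the structural hypothesis under which part~(ii) was proved. A key simplification to record at the outset is that, since $\widetilde{M}(t)\equiv\widetilde{M}$ is independent of $t$, the pointwise conditions ``$\widetilde{M}(t)^{\top}z_T=0$ for almost every $t\in[0,T]$'' appearing in \eqref{eq:Finite_eq_3} and \eqref{eq:Finite_eq_4} collapse to the single algebraic condition $\widetilde{M}^{\top}z_T=0$.

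For the sufficiency direction ($\Leftarrow$) I would assume the unique-continuation condition: every solution $w$ of \eqref{eq:Finite_eq_2} with $B^{\top}w\equiv 0$ on $[0,T]$ satisfies $w_T=\widetilde{M}^{\top}z_T=0$. By the collapse just noted, this is exactly condition \eqref{eq:Finite_eq_3} of Theorem~\ref{th:Finite_the_1}(i) specialized to a constant kernel, so invoking part~(i) yields at once that system \eqref{eq:Finite_eq_1} is Delay and memory-type null controllable with kernel $M$.

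For the necessity direction ($\Rightarrow$) I would assume \eqref{eq:Finite_eq_1} is Delay and memory-type null controllable. Since the kernels obey the required factorization, Theorem~\ref{th:Finite_the_1}(ii) applies and shows that any solution of \eqref{eq:Finite_eq_2} with $B^{\top}w\equiv 0$ satisfies $\widetilde{M}^{\top}z_T=0$; moreover the argument in that proof---setting $\varphi=w_t$, recognizing that $\varphi$ solves a system of the same form \eqref{eq:Finite_eq_2}, applying the energy estimate \eqref{eq:Finite_eq_6} to force all time derivatives of $w$ to vanish at the endpoint, and concluding $w\equiv 0$ by analyticity in time---additionally delivers $w_T=0$. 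Combining these yields the full conclusion $w_T=\widetilde{M}^{\top}z_T=0$.

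The bulk of the difficulty is not in the Corollary itself but has already been discharged inside Theorem~\ref{th:Finite_the_1}; the only genuine point requiring care is reconciling the exact form of the two conditions. Part~(i) takes the strong implication $B^{\top}w\equiv 0\Rightarrow(w_T=0\text{ and }\widetilde{M}^{\top}z_T=0)$ as its hypothesis, whereas the statement of part~(ii) records only $\widetilde{M}^{\top}z_T=0$ in its conclusion. The step I expect to need the most attention is therefore checking that the proof of part~(ii) genuinely produces the stronger statement $w_T=0$ as well (which it does, through the analyticity-in-time argument), so that the conclusion of necessity matches the hypothesis of sufficiency verbatim and the two implications close up into a clean equivalence.
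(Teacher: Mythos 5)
Your proposal is correct and matches the paper's intended argument: the paper states this corollary as a direct consequence of Theorem~\ref{th:Finite_the_1}, obtaining sufficiency from part~(i) and necessity from part~(ii), exactly as you do. Your extra care in checking that the proof of part~(ii) (via the iterated energy estimate and analyticity in time, yielding $w\equiv 0$ and hence $w_T=0$) delivers the full conclusion $w_T=\widetilde{M}^{\top}z_T=0$ --- stronger than what the statement of part~(ii) records --- is precisely the point the paper glosses over, and it is handled correctly.
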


We now derive certain rank conditions that characterize the Delay and memory-type null controllability of system (\ref{eq:Finite_eq_1}).

\begin{remark}

The Delay and memory-type null controllability can also be addressed for the more general system:
\begin{equation}
\label{eq:Finite_eq_34}
\left\{
\begin{aligned}
& y_t = A(t)y + \int_0^t M(t-s)y(s)\,ds + B(t)u, && t \in (0,T], \\
& y(0) = y_0,
\end{aligned}
\right.
\end{equation}
where \( A \in L^\infty(0,T;\mathbb{R}^{n \times n}) \) and \( B \in L^\infty(0,T;\mathbb{R}^{n \times m}) \). For this case, one can analyze the extended system:
\begin{equation}
\label{eq:Finite_eq_35}
\left\{
\begin{aligned}
& y_t = A(t)y + \int_0^t M(t-s)y(s)\,ds + B(t)u, && t \in (0,T], \\
& z_t = \widetilde{M}(0)y + \int_0^t \widetilde{M}'(t-s)y(s)\,ds, && t \in (0,T], \\
& y(0) = y_0, \quad z(0) = 0,
\end{aligned}
\right.
\end{equation}
with the terminal condition (\ref{eq:Intro_eq_9}) being equivalent to requiring \( y(T) = z(T) = 0 \).

\vspace{1em}
In the special case where both memory kernels are constant matrices, i.e., \( M(\cdot) \equiv M \in \mathbb{R}^{n \times n} \) and \( \widetilde{M}(\cdot) \equiv \widetilde{M} \in \mathbb{R}^{n \times n} \), and \( M = \widetilde{G}\widetilde{M} \) for some \( G \in \mathbb{R}^{n \times n} \), the extended system \eqref{eq:Finite_eq_35} simplifies to:
\begin{equation}
\label{eq:Finite_eq_36}
\left\{
\begin{aligned}
& y_t = A(t)y + Gz + B(t)u, && t \in (0,T], \\
& z_t = \widetilde{M}y, && t \in (0,T], \\
& y(0) = y_0, \quad z(0) = 0,
\end{aligned}
\right.
\end{equation}
The controllability of this system can be studied using the Silverman–Meadows condition (see, e.g., \cite[Theorem 1.18, p. 11]{coron2007control}). However, when the kernels \( M(\cdot) \) and \( \widetilde{M}(\cdot) \) are not constant, even under the assumption that \( A(\cdot) \equiv A \) is constant, a generalization of the Silverman–Meadows condition to system \eqref{eq:Finite_eq_35} remains unknown. In this context, the result presented in \cite[Theorem 3.3]{chaves2017controllability} can be interpreted as a Delay and memory-type counterpart of the Silverman–Meadows condition, applicable when both \( A(\cdot) = A \) and \( B(\cdot) = B \) are constant matrices.

\end{remark}

\section*{Observability}

We now consider a heat equation with memory and a moving control region $\omega(\cdot) \subset \Omega$, governed by a space-independent memory kernel $M(\cdot) \in L^1(0, T)$:

\begin{equation}
\label{eq:60}
\left\{
\begin{aligned}
    &y_t - \Delta y(t) - \Delta y(t-h) + \int_0^t M(t-s)y(s)\,ds = u\chi_{\omega(t)}(x) && \text{in } Q, \\
    &y = 0 && \text{on } \Sigma, \\
    &y(\theta) = y_0 && -h \leq \theta \leq 0,\quad \text{in } \Omega.
\end{aligned}
\right.
\end{equation}

In the following subsection, we clarify the assumptions imposed on the time-dependent control region and discuss their implications, which are essential for analyzing the controllability of this memory-influenced heat equation.

We assume that the control region $\omega(\cdot)$ is defined via the flow $X(x, t, t_0)$, which describes the evolution of a fixed reference subset under a velocity field 
$f \in C([0, T]; W^{2, \infty}(\mathbb{R}^n; \mathbb{R}^n))$. Specifically, the flow $X$ satisfies:
\[
\left\{
\begin{aligned}
\frac{\partial X(x, t, t_0)}{\partial t} &= f(t, X(x, t, t_0)), \quad t \in [0, T], \\
X(x, t_0, t_0) &= x \in \mathbb{R}^n.
\end{aligned}
\right.
\]

The following structural condition is assumed (see \cite{chaves2014null}):

\begin{assumption}
\label{Ass:Assum_Obser_1}
There exist a flow $X(x, t, 0)$ generated by a vector field $f \in C([0, T]; W^{2, \infty}(\mathbb{R}^n; \mathbb{R}^n))$, a bounded open smooth set $\omega_0 \subset \mathbb{R}^n$, a smooth curve $\Gamma(\cdot) \in C^\infty([0, T]; \mathbb{R}^n)$, and two times $0 \leq t_1 < t_2 \leq T$ such that:
\[
\left\{
\begin{aligned}
&\Gamma(t) \in X(\omega_0, t, 0) \cap \Omega \quad \forall\, t \in [0, T], \\
&\overline{\Omega} \subset \bigcup_{t \in [0, T]} X(\omega_0, t, 0) = \{ X(x, t, 0) \mid x \in \omega_0,\, t \in [0, T] \}, \\
&\Omega \setminus \overline{X(\omega_0, t, 0)} \text{ is nonempty and connected for } t \in [0, t_1] \cup [t_2, T], \\
&\Omega \setminus \overline{X(\omega_0, t, 0)} \text{ has two disjoint nonempty connected components for } t \in (t_1, t_2), \\
&\forall\, \bar{t} \in (t_1, t_2),\, \exists\, \gamma(\cdot) \in C([0, T]; \Omega) \text{ such that } \gamma(\bar{t}) \in X(\omega_0, \bar{t}, 0).
\end{aligned}
\right.
\]
\end{assumption}

To proceed, we utilize a previously established weight function, as presented in the following lemma.

\begin{lemma}\cite{chaves2014null}\label{lemma:lemma_Observation_1}
Let Assumption \ref{Ass:Assum_Obser_1} hold, and let $\omega$ and $\omega_1$ be any two nonempty open subsets of $\mathbb{R}^n$ satisfying $\overline{\omega} \subset \omega_1$ and $\overline{\omega_1} \subset \omega$. Then, there exists a constant $\delta \in (0, T/2)$ and a function $\psi \in C^\infty(\overline{Q})$ such that
\[
\left\{
\begin{array}{ll}
\nabla \psi(t,x) \neq 0, & \text{for } t \in [0,T],~x \in \overline{\Omega} \setminus X(\omega_1,t,0), \\
\psi_t(t,x) \neq 0, & \text{for } t \in [0,T],~x \in \overline{\Omega} \setminus X(\omega_1,t,0), \\
\psi_t(t,x) > 0, & \text{for } t \in [0,\delta],~x \in \overline{\Omega} \setminus X(\omega_1,t,0), \\
\psi_t(t,x) < 0, & \text{for } t \in [T-\delta,T],~x \in \overline{\Omega} \setminus X(\omega_1,t,0), \\
\partial_\nu \psi(t,x) \leq 0, & \text{for } t \in [0,T],~x \in \partial \Omega, \\
\psi(t,x) \geq \frac{3}{4} \|\psi\|_{L^\infty(Q)}, & \text{for } t \in [0,T],~x \in \overline{\Omega}.
\end{array}
\right.
\]
\end{lemma}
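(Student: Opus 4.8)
Since Lemma \ref{lemma:lemma_Observation_1} is quoted from \cite{chaves2014null}, the plan is to reconstruct the weight it provides rather than to invoke a new idea: first secure the spatial nondegeneracy through the flow $X$, then graft on a temporal profile, and finally correct the boundary and normalization clauses, which are soft. The guiding principle is that Assumption \ref{Ass:Assum_Obser_1} is tailored precisely so that the critical set of $\psi(t,\cdot)$ can be forced to travel inside the moving window $X(\omega_1,t,0)$ for every $t$.

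First I would fix a static Fursikov--Imanuvilov weight $\psi_0\in C^\infty$, defined on a neighborhood of $\overline{\Omega}$ and associated with the reference set $\omega_0$, satisfying $\psi_0>0$ in $\Omega$, $\psi_0=0$ and $\partial_\nu\psi_0\le 0$ on $\partial\Omega$, and $\nabla\psi_0\neq 0$ on $\overline{\Omega}\setminus\omega_0$. Transporting it by the inverse flow, set $\phi(t,x):=\psi_0\big(X(x,0,t)\big)$. Because $X(\cdot,0,t)$ is a diffeomorphism, $\nabla_x\phi(t,x)=(D_xX(x,0,t))^\top\nabla\psi_0(X(x,0,t))$ vanishes only when $\nabla\psi_0=0$ at $X(x,0,t)$, i.e. when $X(x,0,t)\in\omega_0$, i.e. when $x\in X(\omega_0,t,0)$. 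Thus $\phi$ already realizes the spatial clause on $\overline{\Omega}\setminus X(\omega_1,t,0)$ after shrinking $\omega_1$ slightly. The boundary clause $\partial_\nu\psi\le 0$ is inherited from $\psi_0$ up to the action of the flow on the normal, which one controls using $f\in C([0,T];W^{2,\infty})$ and, if necessary, a tangential modification near $\partial\Omega$.

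Next I would enforce the temporal clauses. The difficulty is that $\phi_t=\nabla\psi_0\cdot\partial_tX(x,0,t)$ may vanish, so I would superpose a profile in time, writing $\psi(t,x)=\phi(t,x)+\mu\,g(t)$ with $g'>0$ on $[0,\delta]$, $g'<0$ on $[T-\delta,T]$, $\mu$ large, and $g$ adjusted on $(t_1,t_2)$. Here the connectivity conditions of Assumption \ref{Ass:Assum_Obser_1} enter decisively: on $[0,t_1]\cup[t_2,T]$ the complement $\Omega\setminus\overline{X(\omega_0,t,0)}$ is connected, so a single monotone $g$ yields $\psi_t\neq 0$ there, whereas on $(t_1,t_2)$ the complement splits into two components, and one arranges $\psi$ to increase on one and decrease on the other so that $\psi_t$ never vanishes while $\psi$ stays smooth through the merge at $t_1$ and the split at $t_2$. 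Finally, adding a large constant $c$ (which alters neither $\nabla\psi$, $\psi_t$, nor $\partial_\nu\psi$) and using $\psi>0$, one secures $\psi\ge\tfrac34\|\psi\|_{L^\infty(Q)}$ by taking $c\ge 3\max\psi-4\min\psi$.

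The main obstacle is the simultaneous nonvanishing of $\nabla\psi$ and $\psi_t$ outside the moving window for every $t\in[0,T]$: the spatial pullback and the temporal monotonization pull in different directions, and reconciling them is exactly what forces the topological transition at $t_1,t_2$ imposed in Assumption \ref{Ass:Assum_Obser_1}. Making the gradient picture close up smoothly across this transition---so that the two components present on $(t_1,t_2)$ fuse into one connected complement at the endpoints without producing a critical point of $\psi$ outside $X(\omega_1,t,0)$---is the delicate step; by comparison, the boundary sign condition and the $\tfrac34$-normalization are routine corrections carried out at the end.
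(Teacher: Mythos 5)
First, a point of comparison: the paper itself gives no proof of this lemma at all --- it is imported verbatim from \cite{chaves2014null} --- so your proposal is really being measured against the construction in that reference, which builds the weight $\psi$ directly as a genuinely space--time coupled object, not as a transported static weight plus a time profile.

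That distinction is where your argument has a real gap. Your ansatz $\psi(t,x)=\psi_0\bigl(X(x,0,t)\bigr)+\mu\,g(t)$ cannot satisfy the temporal clauses. Since the complement $\overline{\Omega}\setminus X(\omega_1,t,0)$ is nonempty for \emph{every} $t\in[0,T]$, and $\psi_t$ must be $>0$ there near $t=0$, $<0$ near $t=T$, and never zero there in between, the sign flip of $t\mapsto\psi_t(t,x)$ at each point $x$ must occur while $x$ is covered by the moving window; in particular the flip happens at \emph{different times for different points}. But with $\mu$ large the sign of $\psi_t=\phi_t+\mu g'(t)$ is essentially the sign of $g'(t)$, which is the same for all $x$ at each instant: at the time(s) $t^\ast$ where $g'$ changes sign one has $\psi_t\approx\phi_t$, and $\phi_t=\nabla\psi_0\cdot\partial_t X$ is uncontrolled outside the window, so the second clause fails there. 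You yourself note that on $(t_1,t_2)$ one must make $\psi_t$ positive on one component of the complement and negative on the other --- but an additive profile $g(t)$ depending on time alone can never produce opposite signs of $\psi_t$ at the same instant on two disjoint spatial components. Resolving this is exactly the content of the construction in \cite{chaves2014null}: the weight is glued from pieces whose time-monotonicity differs on the two components created during $(t_1,t_2)$, with the switch hidden inside the moving region; it is not obtainable from a pullback of a fixed Fursikov--Imanuvilov weight. A secondary, but also non-routine, gap is the boundary clause: for $x\in\partial\Omega$ one has $\partial_\nu\psi(t,x)=\bigl(D_xX(x,0,t)\,\nu(x)\bigr)\cdot\nabla\psi_0\bigl(X(x,0,t)\bigr)$, and since $X(x,0,t)\notin\partial\Omega$ in general there is no sign information to inherit; the proposed ``tangential modification near $\partial\Omega$'' risks creating new critical points of $\psi(t,\cdot)$ outside $X(\omega_1,t,0)$, which is precisely what the lemma forbids. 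Only the final normalization step (adding $c\ge 3\sup\psi-4\inf\psi$) is correct as stated.
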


Following \cite{chaves2014null}, let us consider a smooth function $g \in C^\infty(0,T)$ defined by
\[
g(t) = 
\begin{cases}
\frac{1}{t} & \text{if } 0 < t < \delta/2, \\
\text{strictly decreasing} & \text{if } 0 < t < \delta, \\
1 & \text{if } \delta \leq t \leq \frac{T}{2}, \\
g(T - t) & \text{if } \frac{T}{2} \leq t < T,
\end{cases}
\]
and define the following weight functions on $Q$:
\[
\varphi(t,x) = g(t) \left( e^{\lambda \|\psi\|_{L^\infty(Q)}} - e^{\lambda \psi(t,x)} \right), \quad
\theta(t,x) = g(t) e^{\lambda \psi(t,x)},
\]
where $\lambda > 0$ is a fixed parameter. For any $p \in H^{1,2}(Q)$, $q \in L^2(Q)$, and $s > 0$, we introduce the quantities:
\begin{align}
I_H(p(t)) &= \int_Q \left[ (s\theta)^{-1}(|\Delta p(t)|^2+|\Delta p(t-h)|^2 + |p_t|^2) + \lambda^2 s \theta |\nabla p|^2 + \lambda^4 (s \theta)^3 |p|^2 \right] e^{-2s\varphi} \, dx \, dt,  \\
I_O(q) &= \lambda^{2s} \int_Q \theta |q|^2 e^{-2s\varphi} \, dx \, dt. 
\end{align}

The subsequent two lemmas are adapted from the results established in \cite{chaves2014null}.
\begin{lemma}\cite{chaves2014null}
\label{lemma:lemma_Observation_2}
Assume that Assumption \ref{Ass:Assum_Obser_1} is satisfied, and let $\omega_1$ be as defined in Lemma \ref{lemma:lemma_Observation_1}. Then, there exist constants $\lambda_0 > 0$ and $s_0 > 0$ such that for any $\lambda \geq \lambda_0$, $s \geq s_0$, and for all functions $p \in C([0,T];L^2(\Omega))$ satisfying $p_t + \Delta p(t) + \Delta p(t-h) \in L^2(0,T;L^2(\Omega))$, the following estimate holds:
\begin{equation}
I_H(p) \leq C \left( \int_Q |p_t + \Delta p(t)+ \Delta p(t-h)|^2 e^{-2s\varphi} \, dx \, dt + \lambda^4 s^3 \int_0^T \int_{X(\omega_1,t,0)} \theta^3 |p|^2 e^{-2s\varphi} \, dx \, dt \right).
\end{equation}
\end{lemma}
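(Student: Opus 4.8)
The plan is to avoid a direct Carleman computation for the delay operator $Lp := p_t + \Delta p(t) + \Delta p(t-h)$—whose conjugation $q = e^{-s\varphi}p$ would couple the times $t$ and $t-h$ through incompatible weights—and instead to reduce to the moving-control Carleman estimate already available in \cite{chaves2014null} for the pure heat operator $p_t + \Delta p(t)$. The key structural fact I would exploit is that the weight $\psi$ furnished by Lemma \ref{lemma:lemma_Observation_1} (non-vanishing $\nabla\psi$ and $\psi_t$ outside the moving region, the correct sign of $\psi_t$ near $t=0,T$, and $\partial_\nu\psi\le 0$ on $\partial\Omega$) is precisely tailored so that the \cite{chaves2014null} estimate holds with the localized term over $X(\omega_1,t,0)$. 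Writing $F := Lp$, the function $p$ solves $p_t + \Delta p(t) = F - \Delta p(t-h)$ with source $\widetilde{F} := F - \Delta p(t-h)$.

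First I would apply the estimate of \cite{chaves2014null} to $\widetilde{F}$ to control the standard Carleman quantity
$$I_H^{\mathrm{std}}(p) := \int_Q\Big[(s\theta)^{-1}(|\Delta p(t)|^2+|p_t|^2) + \lambda^2 s\theta|\nabla p|^2 + \lambda^4(s\theta)^3|p|^2\Big]e^{-2s\varphi}\,dx\,dt$$
by $C\big(\int_Q|\widetilde{F}|^2 e^{-2s\varphi}\,dx\,dt + \lambda^4 s^3\int_0^T\int_{X(\omega_1,t,0)}\theta^3|p|^2 e^{-2s\varphi}\,dx\,dt\big)$. The remaining left-hand term in $I_H(p)$, namely $\int_Q (s\theta)^{-1}|\Delta p(t-h)|^2 e^{-2s\varphi}$, carries the \emph{small} weight $(s\theta)^{-1}$, and this is exactly what makes its recovery cheap: using the defining identity $\Delta p(t-h) = F - p_t - \Delta p(t)$ together with the elementary bound $(s\theta)^{-1}\le C$ valid for $s\ge s_0$ (since $\theta = g\,e^{\lambda\psi}$ is bounded below by a positive constant, as $g\ge 1$ and $\psi\ge \tfrac34\|\psi\|_{L^\infty(Q)}\ge 0$), I would estimate this term by $C\int_Q|F|^2 e^{-2s\varphi}$ plus a multiple of the already-controlled quantities $\int_Q(s\theta)^{-1}(|p_t|^2+|\Delta p(t)|^2)e^{-2s\varphi}\le I_H^{\mathrm{std}}(p)$. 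Thus $I_H(p)\le C\,I_H^{\mathrm{std}}(p) + C\int_Q|F|^2 e^{-2s\varphi}$, and it only remains to bound $I_H^{\mathrm{std}}(p)$.

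The main obstacle is the cross term produced by the source split $|\widetilde{F}|^2\le 2|F|^2 + 2|\Delta p(t-h)|^2$: this generates $\int_Q|\Delta p(t-h)|^2 e^{-2s\varphi}$ with the \emph{full} weight $e^{-2s\varphi}$, which cannot be absorbed directly into $I_H^{\mathrm{std}}(p)$, whose only $\Delta p$-contribution carries the small factor $(s\theta)^{-1}$. To close the estimate I would perform the time translation $\tau = t-h$, rewriting this term as $\int_{-h}^{T-h}\int_\Omega |\Delta p(\tau)|^2 e^{-2s\varphi(\tau+h,x)}\,dx\,d\tau$, and then compare $e^{-2s\varphi(\tau+h)}$ with $(s\theta(\tau))^{-1}e^{-2s\varphi(\tau)}$ using the $C^\infty$-regularity of $\psi$ and $g$ together with the $W^{2,\infty}$-bound on the velocity field $f$ of Assumption \ref{Ass:Assum_Obser_1}; the boundedness of $|\varphi(\tau+h)-\varphi(\tau)|$ and of $\theta$ on compact time-subintervals lets the factor $(s\theta)^{-1}$, for $s\ge s_0$ and $\lambda\ge\lambda_0$ large, dominate the weight ratio, so that the shifted integral over $[0,T-h]$ is absorbed into $I_H^{\mathrm{std}}(p)$. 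The delicate point I expect to be hardest is the residual contribution on $[-h,0]$ coming from the initial history of $p$; here I would exploit the blow-up $g(t)\to\infty$ as $t\to 0^+$, which forces $\varphi(\tau+h)\to+\infty$ and hence $e^{-2s\varphi(\tau+h)}\to 0$ for $\tau$ near $-h$, so that the weight degeneration suppresses the history term (alternatively, one incorporates this fixed finite contribution into the source hypothesis). Collecting all bounds and choosing $\lambda\ge\lambda_0$, $s\ge s_0$ then yields $I_H(p)\le C\big(\int_Q|F|^2 e^{-2s\varphi}\,dx\,dt + \lambda^4 s^3\int_0^T\int_{X(\omega_1,t,0)}\theta^3|p|^2 e^{-2s\varphi}\,dx\,dt\big)$, as claimed.
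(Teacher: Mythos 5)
Your proposal does not close, and the gap sits at its decisive step. For context: the paper itself offers no proof of this lemma at all — it is imported by citation from \cite{chaves2014null}, where the Carleman estimate is established for the heat operator $p_t+\Delta p$ with moving control, \emph{without} the delayed Laplacian; the term $\Delta p(t-h)$ has simply been inserted into the statement. So the only question is whether your reduction actually proves the adapted statement, and it does not.

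The fatal step is the absorption of $\int_Q |\Delta p(t-h)|^2 e^{-2s\varphi(t)}\,dx\,dt$ into $I_H^{\mathrm{std}}(p)$ after the shift $\tau=t-h$. That absorption requires, pointwise,
\[
e^{-2s\varphi(\tau+h,x)} \le \epsilon\,\bigl(s\theta(\tau,x)\bigr)^{-1} e^{-2s\varphi(\tau,x)},
\qquad\text{equivalently}\qquad
s\,\theta(\tau,x)\,e^{2s[\varphi(\tau,x)-\varphi(\tau+h,x)]}\le \epsilon .
\]
Wherever $\varphi(\tau,x)>\varphi(\tau+h,x)$, the left-hand side grows \emph{exponentially} in $s$, while the factor $(s\theta)^{-1}$ you invoke decays only \emph{polynomially}; no choice of $s_0,\lambda_0$ can reverse this, so your claim that $(s\theta)^{-1}$ ``dominates the weight ratio'' assigns the polynomial and exponential factors the wrong roles. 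And the bad sign genuinely occurs: near $\tau=0$ one has $g(\tau)=1/\tau\to\infty$, hence $\varphi(\tau,x)\to+\infty$ at every point where $\psi(\tau,x)<\|\psi\|_{L^\infty(Q)}$, while $\varphi(\tau+h,x)$ stays bounded — there $I_H^{\mathrm{std}}(p)$ carries an exponentially vanishing weight on $\Delta p(\tau)$, yet the term you must bound carries a weight of order one. Even on the plateau where $g\equiv 1$, the sign of $\varphi(\tau)-\varphi(\tau+h)=e^{\lambda\psi(\tau+h)}-e^{\lambda\psi(\tau)}$ is uncontrolled, precisely because the weight $\psi$ of Lemma \ref{lemma:lemma_Observation_1} moves with the control region. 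This is exactly the weight-incompatibility obstruction you named in your opening sentence; relabelling $\Delta p(t-h)$ as a source does not remove it, because sources enter the estimate of \cite{chaves2014null} with the full weight $e^{-2s\varphi(t)}$, whereas the estimate returns control of $\Delta p$ only with the damped weight $(s\theta)^{-1}e^{-2s\varphi}$. (Your treatment of the history piece on $[-h,0]$ has a smaller hole of the same kind: $e^{-2s\varphi(\tau+h,x)}$ does \emph{not} tend to zero at points where $\psi(\tau+h,x)=\|\psi\|_{L^\infty(Q)}$, since there $\varphi=0$; and folding that contribution into ``the source hypothesis'' changes the statement being proved.)
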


\begin{lemma}\cite{chaves2014null}
\label{lemma:lemma_Observation_3}
Suppose Assumption \ref{Ass:Assum_Obser_1} holds and $\omega$ is defined as in Lemma \ref{lemma:lemma_Observation_1}. Then there exist constants $\lambda_1 \geq \lambda_0$ and $s_1 \geq s_0$ such that for all $\lambda \geq \lambda_1$, $s \geq s_1$, and $q \in H^1(0,T;L^2(\Omega))$, the following inequality is satisfied:
\begin{equation}
I_O(q) \leq C \left( \int_Q |q|^2 e^{-2s\varphi} \, dx \, dt + \lambda^2 s^2 \int_0^T \int_{X(\omega,t,0)} \theta^2 |q|^2 e^{-2s\varphi} \, dx \, dt \right).
\end{equation}
\end{lemma}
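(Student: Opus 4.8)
The plan is to read this as a Carleman-type estimate for the purely temporal (ODE/transport) part of the adjoint system, following and adapting the moving-control argument of \cite{chaves2014null}. Since the statement carries no differential operator acting on $q$, the only structural input is the weight, so the whole estimate rests on the monotonicity of $\psi$ supplied by Lemma~\ref{lemma:lemma_Observation_1}. First I would split the cylinder $Q$ into the moving control tube $\mathcal{C}=\{(t,x):x\in X(\omega,t,0)\}$ and its complement $Q\setminus\mathcal{C}$. On $\mathcal{C}$ the integrand of $I_O(q)$ is dominated pointwise by that of the localized term on the right-hand side, because $\lambda^2 s\,\theta\le \lambda^2 s^2\,\theta^2$ once $s\theta\ge 1$ (which holds for $s\ge s_1$ large, since $\theta=g e^{\lambda\psi}$ is bounded below by a positive constant via $\psi\ge\tfrac{3}{4}\|\psi\|_{L^\infty(Q)}$); hence that portion is absorbed directly into $\lambda^2 s^2\int_0^T\int_{X(\omega,t,0)}\theta^2|q|^2 e^{-2s\varphi}$.

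The core of the argument is the estimate on $Q\setminus\mathcal{C}$, where Lemma~\ref{lemma:lemma_Observation_1} guarantees $\psi_t(t,x)\neq 0$. Using $\varphi=g\,(e^{\lambda\|\psi\|_{L^\infty(Q)}}-e^{\lambda\psi})$ and $\theta=g e^{\lambda\psi}$ one computes $\varphi_t=g'(e^{\lambda\|\psi\|_{L^\infty(Q)}}-e^{\lambda\psi})-\lambda\psi_t\theta$, so that on the complement
\[
\lambda^2 s\,\theta=\frac{\lambda s}{\psi_t}\Bigl(g'(e^{\lambda\|\psi\|_{L^\infty(Q)}}-e^{\lambda\psi})-\varphi_t\Bigr).
\]
Substituting this identity into $I_O(q)$ turns the leading contribution into $\tfrac{\lambda}{2}\int (|q|^2/\psi_t)\,\partial_t(e^{-2s\varphi})$, which I would integrate by parts in $t$. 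The boundary terms at $t=0$ and $t=T$ vanish because $\varphi\to+\infty$ there (as $g\to\infty$), forcing $e^{-2s\varphi}$ to decay faster than any polynomial weight. What remains after integration by parts is $-\tfrac{\lambda}{2}\int\partial_t(|q|^2/\psi_t)\,e^{-2s\varphi}$ together with the $g'$ term; both carry strictly fewer powers of $s$ and $\lambda$ than $I_O(q)$ and are therefore lower order, to be bounded by $C\int_Q|q|^2 e^{-2s\varphi}$ plus a small fraction of $I_O(q)$ absorbed back into the left-hand side for $\lambda,s$ large.

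The main obstacle I anticipate lies in the interior term $\partial_t(|q|^2/\psi_t)=2q q_t/\psi_t-|q|^2\psi_{tt}/\psi_t^2$. The factor $\lambda\, q\, q_t$ is precisely what forces the hypothesis $q\in H^1(0,T;L^2(\Omega))$, and a Young-type splitting must be chosen so that the $q_t$ contribution is swallowed by the available lower-order budget rather than by $I_O(q)$ itself. A related difficulty is that $\psi_t$, while nonzero off the tube, may become small near the interface $\partial X(\omega,t,0)$ and near the switching times $t_1,t_2$ of Assumption~\ref{Ass:Assum_Obser_1}; this is exactly why the nested sets $\overline{\omega_1}\subset\omega$ of Lemma~\ref{lemma:lemma_Observation_1} are used, together with a smooth cutoff rather than a sharp split, so that a uniform lower bound $|\psi_t|\ge c>0$ is available on $Q\setminus\mathcal{C}$ and the transition layers (where $x$ enters or exits the tube) are contained inside the control region and thus absorbed into the localized term. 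Once these quantitative lower bounds and the $H^1$-absorption are in place, combining the tube and complement estimates and fixing $\lambda_1\ge\lambda_0$, $s_1\ge s_0$ large yields the claimed inequality; since the result is quoted from \cite{chaves2014null}, I would otherwise simply invoke their proof after verifying that the weight conventions for $\varphi$ and $\theta$ coincide.
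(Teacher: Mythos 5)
Your overall route is the right one, and it is in fact the route taken in \cite{chaves2014null} (the paper itself offers no proof of Lemma \ref{lemma:lemma_Observation_3}; it simply quotes that reference): split $Q$ into the moving tube $\mathcal{C}$ and its complement, absorb the tube contribution pointwise using $\lambda^2 s\theta \le \lambda^2 s^2\theta^2$ (valid since $\theta \ge e^{\frac34\lambda\|\psi\|_{L^\infty(Q)}} \ge 1$), and on the complement use $\psi_t \neq 0$ from Lemma \ref{lemma:lemma_Observation_1} to express $\lambda^2 s\theta$ through $\varphi_t$ and integrate by parts in time, with a smooth cutoff so that the moving interface is swallowed by the localized observation term. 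Your identity $\varphi_t = g'\bigl(e^{\lambda\|\psi\|_{L^\infty(Q)}}-e^{\lambda\psi}\bigr)-\lambda\psi_t\theta$ and the resulting rewriting of the leading term are correct.

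The genuine gap is in your treatment of the cross term $\lambda q q_t/\psi_t$ produced by the integration by parts. No Young-type splitting can make this term land in "the available lower-order budget" $C\int_Q |q|^2 e^{-2s\varphi}\,dx\,dt$ plus a small fraction of $I_O(q)$: every quantity in that budget involves only $q$, and no such quantity dominates $q_t$ (take $q$ rapidly oscillating in time); integrating by parts a second time merely regenerates $I_O(q)$ with constant of order one, so that route is circular. The term $\int_Q |q_t|^2 e^{-2s\varphi}\,dx\,dt$ must remain on the right-hand side — this is precisely what the hypothesis $q\in H^1(0,T;L^2(\Omega))$ is for, and it is how the lemma actually reads in \cite{chaves2014null} and how it must read here to be consistent with Corollary \ref{cor:cor_1} at $m=1$. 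Indeed, the inequality as transcribed in the statement (with $|q|^2$ in the first right-hand term) is false: take $q(t,x)=\zeta(t)\rho(x)$ with $\rho$ supported in a ball that avoids $X(\omega,t,0)$ for all $t$ in the support of $\zeta$ (possible since, by Assumption \ref{Ass:Assum_Obser_1}, the complement of the tube is nonempty at each time and moves continuously); then the localized term vanishes and, since $\theta\ge 1$, the left side exceeds the right by a factor of order $\lambda^2 s$, which contradicts the estimate for $\lambda, s$ large. A secondary caveat: with the paper's normalization $\varphi = g\,(e^{\lambda\|\psi\|_{L^\infty(Q)}}-e^{\lambda\psi})$ the weight need not blow up as $t\to 0,T$ at points where $\psi$ attains its supremum, so your claim that the temporal boundary terms vanish requires the standard convention $e^{2\lambda\|\psi\|_{L^\infty(Q)}}$ in place of $e^{\lambda\|\psi\|_{L^\infty(Q)}}$. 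With the $q_t$ term retained on the right and that normalization fixed, your outline closes and coincides with the cited proof.
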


An immediate consequence of Lemma \ref{lemma:lemma_Observation_3} is the following result.

\begin{corollary}\cite{chaves2017controllability}
\label{cor:cor_1}
Under the assumptions stated in Lemma \ref{lemma:lemma_Observation_3}, for any $\lambda \geq \lambda_1$, $s \geq s_1$, $m \in \mathbb{N}$, and $q \in H^m(0,T;L^2(\Omega))$, the following estimate holds:
\begin{equation}
I_O(q) + \sum_{k=1}^{m-1} I_O(\partial_t^k q)
\leq C \left( \int_Q |\partial_t^m q|^2 e^{-2s\varphi} \, dx \, dt + \int_0^T \int_{X(\omega,t,0)} (\lambda s \theta)^{P(m)} |q|^2 e^{-2s\varphi} \, dx \, dt \right),
\end{equation}
where $P(m)$ denotes a polynomial function of $m$.
\end{corollary}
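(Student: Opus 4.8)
The plan is to argue by induction on $m$, using Lemma \ref{lemma:lemma_Observation_3} as the single-step building block and reducing, one unit at a time, the order of the time derivative that appears in the local (moving-region) integral. The base case $m=1$ is essentially Lemma \ref{lemma:lemma_Observation_3} itself: applied to $q$ it gives $I_O(q) \le C\big(\int_Q |q|^2 e^{-2s\varphi}\,dx\,dt + \lambda^2 s^2\int_0^T\!\int_{X(\omega,t,0)}\theta^2|q|^2 e^{-2s\varphi}\,dx\,dt\big)$. Since $\theta = g(t)e^{\lambda\psi}$ is bounded below by a positive constant (using $g\ge 1$ and $\psi\ge \tfrac34\|\psi\|_{L^\infty}$) and $I_O(q)$ carries a large prefactor in $\lambda,s$, for $\lambda\ge\lambda_1$, $s\ge s_1$ large the global term $\int_Q|q|^2e^{-2s\varphi}$ is absorbed into the left-hand side. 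This yields the claim with $P(1)=2$, the harmless term $\int_Q|\partial_t q|^2 e^{-2s\varphi}$ being unused.

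For the inductive step, assume the estimate holds at level $m$ and take $q\in H^{m+1}(0,T;L^2(\Omega))$. The only new contribution on the left is $I_O(\partial_t^m q)$; since $\partial_t^m q\in H^1(0,T;L^2(\Omega))$, Lemma \ref{lemma:lemma_Observation_3} applies to it and bounds it by a global term $\int_Q|\partial_t^m q|^2 e^{-2s\varphi}$ (again absorbed for $\lambda,s$ large) plus the local term $\lambda^2 s^2\int_0^T\!\int_{X(\omega,t,0)}\theta^2|\partial_t^m q|^2 e^{-2s\varphi}$. The remaining part of the left-hand side is handled by the induction hypothesis, whose global term is precisely $\int_Q|\partial_t^m q|^2 e^{-2s\varphi}$. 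Hence the task reduces to: (i) replacing the global $\int_Q|\partial_t^m q|^2$ by $\int_Q|\partial_t^{m+1}q|^2$ up to absorbable remainders, and (ii) trading the local integral of $|\partial_t^m q|^2$ for a small multiple of the global integral of $|\partial_t^{m+1}q|^2$ plus local integrals of lower-order time derivatives carrying higher powers of $\lambda s\theta$.

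Both reductions are carried out by integration by parts in $t$, localized to the moving region. Concretely, I would fix a smooth cutoff $\chi(t,x)$ supported in a slightly larger moving set $X(\omega',t,0)$ with $\overline{\omega}\subset\omega'$ (the nested-region freedom in Lemma \ref{lemma:lemma_Observation_1} and Assumption \ref{Ass:Assum_Obser_1}), write $\int \chi\,\theta^2(\partial_t^m q)(\partial_t^m q)\,e^{-2s\varphi}$, and move one $\partial_t$ off of one factor. When $\partial_t$ falls on $\partial_t^m q$ it produces $\partial_t^{m+1}q$, which is pushed into a global $\varepsilon\int_Q|\partial_t^{m+1}q|^2$ term via Young's inequality; when it falls on the weight $e^{-2s\varphi}$ or on $\theta^2$ it produces a bounded multiple of $s\lambda\theta$ (because $\varphi_t$ and $\theta_t$ are controlled by $C\theta$ on $\mathrm{supp}\,\chi$), which is exactly the mechanism that raises the power of $\lambda s\theta$; when it falls on $\chi$ it produces a transport commutator controlled by $\|f\|_{W^{1,\infty}}$. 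Iterating this step drives the local time-derivative order down from $m$ to $0$, ending with a local integral of $|q|^2$ weighted by $(\lambda s\theta)^{P(m+1)}$, while the successive global contributions are all of top order $\partial_t^{m+1}q$ and are absorbed for $\lambda,s$ large. Tracking that each reduction increases the power by a fixed amount shows $P$ is genuinely polynomial in $m$, closing the induction.

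The main obstacle is this localized integration by parts in time on the \emph{moving} support $X(\omega,t,0)$: because the region travels with the flow generated by $f\in C([0,T];W^{2,\infty})$, naive time integration by parts generates material-derivative commutators that must be absorbed using the nested cutoffs, and the time derivatives of the Carleman weights $\varphi,\theta$—which degenerate as $g(t)\to\infty$ near $t=0$ and $t=T$—must be shown to stay dominated by $C\theta e^{-2s\varphi}$ so that the power bookkeeping for $(\lambda s\theta)^{P(m)}$ remains consistent and all boundary-in-time terms vanish by the behavior of $g$ at the endpoints. Verifying these weight and commutator estimates, and confirming that each descent step increases the exponent by a controlled amount, is the delicate part of the argument; the rest is routine absorption for $\lambda,s$ large.
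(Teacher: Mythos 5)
Your proposal is correct and follows essentially the same route as the paper's source: the paper itself gives no proof of this corollary (it is quoted from \cite{chaves2017controllability} as an ``immediate consequence'' of Lemma \ref{lemma:lemma_Observation_3}), and the argument there is precisely your induction on $m$ --- apply Lemma \ref{lemma:lemma_Observation_3} to each $\partial_t^k q$, absorb the global terms into the $I_O$ quantities for $\lambda,s$ large, and lower the order of the time derivatives appearing in the local moving-region integrals by repeated integration by parts in time, at the cost of higher powers of $\lambda s\theta$. The only caveat is that the global term in Lemma \ref{lemma:lemma_Observation_3} should be read as $\int_Q |q_t|^2 e^{-2s\varphi}\,dx\,dt$ rather than $\int_Q |q|^2 e^{-2s\varphi}\,dx\,dt$ (a typo in the paper, as your own observation that the stated version would self-improve to a purely local estimate already hints); with this correction your base case is immediate and your inductive step retains exactly the top-order global term $\int_Q |\partial_t^{m+1} q|^2 e^{-2s\varphi}\,dx\,dt$ on the right-hand side, so the power bookkeeping closes as you describe.
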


To investigate Delay and memory-type null controllability for equation~\eqref{eq:60} involving the memory kernel $M(\cdot)$, we make use of the dual system derived from~\eqref{eq:dual_eq_1}. Specifically, we consider the following adjoint system associated with~\eqref{eq:60}:
\begin{equation}
\label{eq:74}
\left\{
\begin{aligned}
w_t &= -\Delta w(t) - \Delta w(t-h) - \int_t^T M(s - t)w(s)\,ds + \widetilde{M}(T - t)z_T && \text{in } Q, \\
w &= 0 && \text{on } \Sigma, \\
w(T) &= w_T && \text{in } \Omega,
\end{aligned}
\right.
\end{equation}
where $w_T, z_T \in L^2(\Omega)$ are given terminal data.

In the subsequent analysis, we consider memory kernels of the form
\begin{equation}
\label{eq:75}
M(t) = e^{at}\sum_{k=0}^K a_k t^k, \quad \widetilde{M}(t) = e^{at} \sum_{k=0}^K b_k t^k,
\end{equation}
with $K \in \mathbb{N}$ and coefficients $a_0, \dots, a_K$, $b_0, \dots, b_K$ belonging to $\mathbb{R}$.

We now state an observability result for the solution of the adjoint system~\eqref{eq:74}.

\begin{theorem}\label{The:The_Obs_1}
Suppose Assumption~\ref{Ass:Assum_Obser_1} is satisfied, and let $\omega$ be an open subset of $\Omega$ such that $\overline{\omega} \subset \Omega$. Assume that the memory kernels $M$ and $\widetilde{M}$ are given by~\eqref{eq:75}. Then, the solution $w$ to~\eqref{eq:74} satisfies the following observability estimate:
\begin{equation}
\label{eq:76}
\|w(\theta)\|_{\bar{X}(0)}^2 \leq C \int_0^T \int_\omega |w|^2 \, dx\,dt \quad \forall w_T, z_T \in L^2(\Omega),
\end{equation}
where $w(t) = X(\cdot, t, 0)$ and $C > 0$ is a constant independent of $w_T$ and $z_T$.
\end{theorem}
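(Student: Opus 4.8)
The plan is to convert the integro--differential adjoint problem \eqref{eq:74} into a (higher--order in time) coupled parabolic problem, exploiting the exponential--polynomial structure \eqref{eq:75}, and then to run the parabolic Carleman estimate of Lemma~\ref{lemma:lemma_Observation_2} against the ODE--type estimates of Lemma~\ref{lemma:lemma_Observation_3} and Corollary~\ref{cor:cor_1}. First I would rewrite \eqref{eq:74} in the form $w_t + \Delta w(t) + \Delta w(t-h) = -\rho(t) + \widetilde{M}(T-t)z_T =: F$, where $\rho(t) = \int_t^T M(s-t)w(s)\,ds$ is the memory term, which matches exactly the operator appearing in Lemma~\ref{lemma:lemma_Observation_2}. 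The key structural observation is that, setting $z_k(t) = \int_t^T e^{a(s-t)}(s-t)^k w(s)\,ds$ so that $\rho = \sum_{k=0}^K a_k z_k$, one has the triangular relation $(\partial_t + a)z_k = -\delta_{k0}\,w - k\,z_{k-1}$; hence $(\partial_t+a)^{K+1}z_k$ is a finite linear combination of $w$ and its time derivatives, while $(\partial_t+a)^{K+1}\bigl[\widetilde{M}(T-t)z_T\bigr]=0$. Thus the operator $\mathcal{L}=(\partial_t+a)^{K+1}$ annihilates the source profile and turns the memory into a purely local (in time) object, which is precisely what makes the iterated estimate of Corollary~\ref{cor:cor_1} the right tool.

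Next I would apply Lemma~\ref{lemma:lemma_Observation_2} to $w$ to obtain $I_H(w) \le C\bigl(\int_Q |F|^2 e^{-2s\varphi}\,dx\,dt + \lambda^4 s^3\int_0^T\!\int_{X(\omega_1,t,0)}\theta^3|w|^2 e^{-2s\varphi}\,dx\,dt\bigr)$, and split $\int_Q |F|^2 e^{-2s\varphi}$ into the memory part $\int_Q|\rho|^2 e^{-2s\varphi}$ and the source part $\int_Q|\widetilde{M}(T-t)z_T|^2 e^{-2s\varphi}$. Each of these is bounded, up to fixed powers of $s,\lambda,\theta$, by the quantities $I_O(z_k)$ (and the analogous object built from $\widetilde{M}$). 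Then I would apply Corollary~\ref{cor:cor_1} with $m=K+1$ to each $z_k$: the top derivative $\partial_t^{\,m}z_k$ is, by the triangular ODE above, a combination of $w$ and its time derivatives, so $\int_Q|\partial_t^m z_k|^2 e^{-2s\varphi}$ is controlled by $I_H(w)$--type terms, while the remaining contribution is a local observation of $z_k$ over the moving region $X(\omega,t,0)$. Because $\overline{\Omega}\subset\bigcup_t X(\omega_0,t,0)$ by Assumption~\ref{Ass:Assum_Obser_1} and $\psi$ satisfies the positivity bound of Lemma~\ref{lemma:lemma_Observation_1}, these moving local terms can be traded back, via the same differentiation identities, for a single local observation of $w$.

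Having both families of estimates, I would \emph{add} the parabolic estimate for $w$ and the ODE estimates for the $z_k$ and absorb all cross terms by taking $\lambda$ and $s$ sufficiently large (using $\theta\ge c>0$ to soak up the lower--order terms $\int_Q|q|^2 e^{-2s\varphi}$ produced by Lemma~\ref{lemma:lemma_Observation_3}). This yields a global weighted inequality of the shape $I_H(w) + \sum_{k=0}^K I_O(z_k) \le C\int_0^T\!\int_{X(\omega,t,0)}(\lambda s\theta)^{P}|w|^2 e^{-2s\varphi}\,dx\,dt$. To reach \eqref{eq:76} I would then restrict to a central time subinterval on which $g\equiv 1$, where the weights $e^{-2s\varphi}$ and $\theta$ are bounded above and below, to remove them and bound $\int\!\int_\Omega|w|^2$ on that slab by the fixed--region observation $\int_0^T\!\int_\omega|w|^2$ (the flow $X(\cdot,t,0)$ relabelling the moving support onto $\omega$). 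Finally a forward/backward energy estimate for \eqref{eq:74}, with Gronwall absorbing the memory and the delay shift $t\mapsto t-h$ (recalling $w\equiv 0$ on $(T,T+h]$), propagates the slab bound to the target time, giving the left--hand side $\|w(\theta)\|^2$ of \eqref{eq:76}.

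The hard part, I expect, will be closing the loop between the two Carleman estimates without losing powers of $s$ and $\lambda$: the $I_O$ bounds for the memory variables generate high time derivatives, which must be reabsorbed into $I_H(w)$, while $I_H(w)$ itself controls the memory source only after those same $I_O$ bounds are in place, so the absorption order and the exact polynomial weight $P(m)$ from Corollary~\ref{cor:cor_1} must be tracked carefully. A secondary technical difficulty is the delay term $\Delta w(t-h)$ together with the terminal condition $w\equiv 0$ on $(T,T+h]$: one must check that the time--shifted Laplacian is compatible with the Carleman weights (which is exactly why Lemma~\ref{lemma:lemma_Observation_2} is stated for the operator $p_t+\Delta p(t)+\Delta p(t-h)$) and handle the history interval when passing from the weighted estimate to the energy inequality at time $\theta$.
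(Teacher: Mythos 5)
Your high-level strategy---exploit the exponential--polynomial form \eqref{eq:75} so that $(\partial_t+a)^{K+1}$ localizes the memory and annihilates the source, then play the parabolic Carleman estimate of Lemma~\ref{lemma:lemma_Observation_2} against Corollary~\ref{cor:cor_1} and absorb for large $s,\lambda$---is exactly the paper's strategy, and your triangular identities for the $z_k$ are correct. However, your concrete decomposition does not close, for two reasons. First, the source term: because you apply Lemma~\ref{lemma:lemma_Observation_2} to $w$ itself, the right-hand side retains $\int_Q|\widetilde M(T-t)z_T|^2e^{-2s\varphi}\,dx\,dt$. This term carries the arbitrary datum $z_T$ with no small prefactor, so it cannot be absorbed into the left-hand side; and applying Corollary~\ref{cor:cor_1} to $\zeta(t):=\widetilde M(T-t)z_T$ (whose $(K+1)$-st derivative indeed vanishes) only converts its global weighted norm into a local one, $\int_0^T\int_{X(\omega,t,0)}(\lambda s\theta)^{P}|\zeta|^2e^{-2s\varphi}\,dx\,dt$, which---since the moving region sweeps all of $\overline{\Omega}$---is essentially a weighted copy of $\|z_T\|^2_{L^2(\Omega)}$ and is not dominated by $\int_0^T\int_\omega|w|^2$. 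The paper avoids this trap by differentiating the equation $K+1$ times \emph{before} invoking any Carleman estimate: the resulting system \eqref{eq:80} in $(w,\dots,\partial_t^{K+1}w=v)$ contains no $z_T$ at all, so no data term ever enters the estimates.

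Second, the step where you claim the local terms $\int_0^T\int_{X(\omega,t,0)}(\lambda s\theta)^{P}|z_k|^2e^{-2s\varphi}\,dx\,dt$ ``can be traded back, via the same differentiation identities, for a single local observation of $w$'' is the step that fails. Since $z_k(t,x)=\int_t^T e^{a(s-t)}(s-t)^k w(s,x)\,ds$, the value of $z_k$ at a point $x\in X(\omega,t,0)$ depends on the \emph{future} values $w(s,x)$, $s\in[t,T]$, at a spatial point that in general has already left the moving observation region; no integration by parts based on $(\partial_t+a)z_k=-\delta_{k0}w-kz_{k-1}$ recovers those values from $w$ restricted to $\{(t,x):x\in X(\omega,t,0)\}$. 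This is precisely why the paper arranges the argument so that every local term in \eqref{eq:81}--\eqref{eq:83} involves only time derivatives $\partial_t^{j}w$ evaluated at the same point $(t,x)$: such terms can be lowered to local $|w|^2$ by integrations by parts in time, with the by-products absorbed into $I_H$ and $\sum_i I_O(\partial_t^i w)$, which already sit on the left-hand side. To repair your proof you would either have to eliminate the memory variables and the source exactly as the paper does (differentiate the PDE, take $v=\partial_t^{K+1}w$ as the parabolic unknown), or establish a new Carleman-type estimate in which the $z_k$ themselves are observed on the moving region---something Lemmas~\ref{lemma:lemma_Observation_2}--\ref{lemma:lemma_Observation_3} do not provide.
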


\begin{proof}
Without loss of generality, we assume $\alpha = 0$ in \eqref{eq:75}. (If $\alpha \neq 0$, one can apply the transformation $w(\cdot) \mapsto e^{-\alpha t} w(\cdot)$ in \eqref{eq:74}.) Define
\[
Z = -\int_t^T M(s - t)w(s)\,ds + \widetilde{M}(T - t)z_T.
\]
Using \eqref{eq:75}, it follows that
\[
\partial_t^{K+1} Z = \sum_{k=0}^K (-1)^k k! a_k \partial_t^{K - k} w.
\]
Substituting this into \eqref{eq:74}, we arrive at the system
\begin{equation}
\label{eq:79}
\begin{cases}
    w_t + \Delta w(t) + \Delta w(t-h) = Z & \text{in } Q, \\
\partial_t^{K+1} Z = \sum_{k=0}^K (-1)^k k! a_k \partial_t^{K - k} w & \text{in } Q, \\
w = 0 & \text{on } \Sigma.
\end{cases}
\end{equation}

We now differentiate the first and third equations in \eqref{eq:79} with respect to time $K+1$ times, and define $v := \partial_t^{K+1} w$. This leads to the system
\begin{equation}
\label{eq:80}
\begin{cases}
    v_t + \Delta v(t) + \Delta v(t-h) = \sum_{k=0}^K (-1)^k k! a_k \partial_t^{K - k} w & \text{in } Q, \\
v = 0 & \text{on } \Sigma, \\
\partial_t^{K+1} w = v & \text{in } Q.
\end{cases}
\end{equation}

Applying Lemma \ref{lemma:lemma_Observation_2} and Corollary \ref{cor:cor_1} to system \eqref{eq:80}, and absorbing lower-order terms, we obtain the estimate
\begin{align}\label{eq:81}
   & I_H(\dot{v}) + I_O(w) + \sum_{i=1}^K I_O(\partial_t^i w) \notag\\
&\leq C \left( \int_0^T \int_{\omega' \cap X(t,0)} \lambda^4 \varphi^3 |\hat{v}|^2 e^{-2\lambda \varphi} \,dx\,dt + \int_0^T \int_{\omega' \cap X(t,0)} (\lambda \varphi)^P |\partial_t^{K+1} w|^2 e^{-2\lambda \varphi} \,dx\,dt \right),
\end{align}
where $\omega'$ is a nonempty open subset of $\mathbb{R}^n$ such that $\overline{\omega} \subset \omega'$, and $P(K+1)$ is a polynomial depending on $K$.

Using the last equation in \eqref{eq:80} and following the same arguments as in the proof of Corollary \ref{cor:cor_1}, we deduce that for any $\varepsilon > 0$,
\begin{align}
\label{eq:82}
& \int_0^T \int_{X(t,0)} \lambda^4 \varphi^3 |\hat{v}|^2 e^{-2\lambda \varphi} \, dx \, dt \notag \\
& \leq \varepsilon \left[ \int_0^T \left( |\varphi|^2 + |\partial_t v|^2 \right) e^{-2\lambda \varphi} \, dx \, dt + I_O(w) + \sum_{i=1}^K I_O(\partial_t^i w) \right] \notag \\
& \quad + \frac{C}{\varepsilon} \int_0^T \int_{\omega' \cap X(t,0)} (\lambda \varphi)^P |\partial_t^{K+1} w|^2 e^{-2\lambda \varphi} \, dx \, dt.
\end{align}

Combining the inequalities \eqref{eq:81} and \eqref{eq:82}, we conclude
\begin{equation}
\label{eq:83}
I_H(\dot{v}) + I_O(w) + \sum_{i=1}^K I_O(\partial_t^i w) \leq C \int_0^T \int_{\omega' \cap X(t,0)} (\lambda \varphi)^P |\partial_t^{K+1} w|^2 e^{-2\lambda \varphi} \, dx \, dt.
\end{equation}

Finally, from \eqref{eq:83} and by employing standard energy estimates for system \eqref{eq:80}, the desired estimate \eqref{eq:76} readily follows. This concludes the proof of Theorem \ref{The:The_Obs_1}.
\end{proof}

By Proposition \ref{prop:2.1}, and as an immediate consequence of Theorem \ref{The:The_Obs_1}, we establish the following Delay and memory-type null controllability result for system (\ref{eq:60}).

\begin{theorem}\label{the:the_obs_2}
Assume the conditions stated in Theorem \ref{The:The_Obs_1} hold. Then, for every initial state $y_0 \in L^2(\Omega)$, there exists a control function $u \in L^2(Q)$ such that the corresponding solution $y(\cdot)$ to system (\ref{eq:60}), where the control is supported as $u(t) = X(\cdot, t, 0)$ for $t \in (0, T)$, fulfills
\[
y(T) = \int_0^T \widetilde{M}(T - s) y(s) \, ds = 0 \quad \text{in } \Omega.
\]
\end{theorem}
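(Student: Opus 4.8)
The plan is to obtain the statement directly from the duality characterization of Proposition \ref{prop:2.1} combined with the observability estimate of Theorem \ref{The:The_Obs_1}, after recognizing \eqref{eq:60} as the concrete PDE realization of the abstract system \eqref{eq:Intro_eq_7}.

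First I would fix the identifications. Take $Y = L^2(\Omega)$, let $A = \Delta$ with domain $H^2(\Omega) \cap H_0^1(\Omega)$ so that $A$ generates the Dirichlet heat semigroup, let $A_1 = \Delta$ act on the delayed state $w(t-h)$, and let $M(\cdot) \in L^1(0,T)$ be the kernel in \eqref{eq:75}. The control operator is the time-dependent multiplier $B(t) = \chi_{\omega(t)}$ with $\omega(t) = X(\omega, t, 0)$, whose adjoint is again multiplication by $\chi_{\omega(t)}$, so that $\|B(t)^* w\|_{L^2(\Omega)}^2 = \int_{\omega(t)} |w|^2\,dx$. Under these identifications the abstract adjoint \eqref{eq:dual_eq_1} collapses precisely to the system \eqref{eq:74}.

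Next I would invoke Proposition \ref{prop:2.1}: Delay and memory-type null controllability of \eqref{eq:60} with kernel $\widetilde{M}(\cdot)$ is equivalent to the observability inequality \eqref{eq:enhanced_obs} for \eqref{eq:74}. Theorem \ref{The:The_Obs_1} furnishes the core Carleman-based estimate, bounding the term $\|w(\theta)\|^2$ by the observed quantity $\int_0^T \int_{\omega(t)} |w|^2\,dx\,dt$. It then remains to promote this single-term bound to the full left-hand side of \eqref{eq:enhanced_obs}, namely to also control the delay integrals $\int_{-h}^{\theta} \|w(t+h)\|^2\,dt$ and $\int_{T_1-h}^{T-h} \|w(t+h)\|^2\,dt$. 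For this I would apply the backward energy estimates for the adjoint parabolic system \eqref{eq:74}: these bound $\|w\|_{C([0,T];L^2(\Omega))}$ in terms of the terminal data and, through the uniform bound on the fundamental solution recorded after \eqref{eq:Dual_2}, absorb the memory and delay contributions, so that the shifted norms $\|w(t+h)\|^2$ are dominated by $\|w(\theta)\|^2$ and hence by the right-hand side once \eqref{eq:76} is available.

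The step I expect to be the main obstacle is exactly this passage from \eqref{eq:76} to the full observability norm in \eqref{eq:enhanced_obs}. The delay structure forces one to track shifted arguments $w(t+h)$ together with the terminal window $(T-h,T]$, where the Carleman weight $\varphi$ degenerates; care is needed to ensure the energy estimate does not discard the delayed contributions near $t = T$. Once \eqref{eq:enhanced_obs} is established, the terminal conditions $y(T) = 0$ and $\int_0^T \widetilde{M}(T-s)y(s)\,ds = 0$ follow at once from the Hahn--Banach extension and Riesz representation argument already executed in the proof of Proposition \ref{prop:2.1}, which produces the control $u \in L^2(Q)$ supported on the moving region $X(\cdot,t,0)$.
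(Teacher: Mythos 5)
Your skeleton --- realizing \eqref{eq:60} as an instance of \eqref{eq:Intro_eq_7}, passing to the adjoint \eqref{eq:74}, and combining Proposition \ref{prop:2.1} with Theorem \ref{The:The_Obs_1} through the Hahn--Banach/Riesz duality construction --- is exactly the paper's route; the paper presents the theorem as an immediate consequence of those two results. The genuine gap is the bridge you insert between \eqref{eq:76} and \eqref{eq:enhanced_obs}. You claim that backward energy estimates let you dominate the shifted norms, in particular $\int_{T_1-h}^{T-h}\|w(t+h)\|_Y^2\,dt$, which lives on the terminal window $[T_1,T]$, by $\|w(\theta)\|_Y^2$. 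That inequality goes in the wrong direction for a backward parabolic system: regularization runs from $t=T$ toward $t=0$, so energy estimates control norms at \emph{earlier} times by norms at \emph{later} times, never the reverse. Concretely, take $M\equiv 0$ (admissible in \eqref{eq:75}), $z_T=0$, and $w_T=\psi_n$ a Dirichlet eigenfunction with eigenvalue $\lambda_n$; since the advanced term vanishes on $(T-h,T]$, the adjoint equation there reduces to the plain backward heat equation and $w(t)=e^{-\lambda_n(T-t)}\psi_n$ near $t=T$, so $\int_{T_1}^{T}\|w(t)\|^2\,dt\sim(2\lambda_n)^{-1}$ while $\|w(\theta)\|^2$ is exponentially small in $\lambda_n$. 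No constant can close the estimate you invoke, and the Carleman machinery does not rescue it either, because the weight $e^{-2s\varphi}$ degenerates at $t=T$ (the function $g$ blows up at both endpoints), so \eqref{eq:83} gives no unweighted control of $w$ on the terminal window.

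Fortunately, for the statement actually being proved this detour is unnecessary. Theorem \ref{the:the_obs_2} claims only $y(T)=0$ and $\int_0^T\widetilde{M}(T-s)y(s)\,ds=0$ --- not the third condition $y\equiv 0$ on $(T-h,T]$ of Definition \ref{def:Def_memory_type of null control} --- and for these two conditions the single-term estimate \eqref{eq:76} furnished by Theorem \ref{The:The_Obs_1} already suffices: run the sufficiency half of Proposition \ref{prop:2.1} with the simpler functional $\Lambda(B^*w)=-\langle w(\theta),\varphi(\theta)\rangle_Y$, extend it by Hahn--Banach, represent it by Riesz to produce $u\in L^2(Q)$ supported on the moving region, and the duality identity then gives $\langle y(T),w_T\rangle_Y-\left\langle \int_0^T\widetilde{M}(T-t)y(t)\,dt,\,z_T\right\rangle_Y=0$ for all $w_T,z_T\in L^2(\Omega)$, hence both terminal conditions. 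Deleting your flawed bridging step and applying the duality argument at the level of \eqref{eq:76} rather than \eqref{eq:enhanced_obs} is, in effect, what the paper does.
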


\begin{remark}
To derive Theorem \ref{the:the_obs_2}, Assumption \ref{Ass:Assum_Obser_1} plays a crucial role, although it may not represent the most general or minimal set of conditions. For example, the fourth requirement—that the complement of the control region splits into two connected components at times $t \in \{t_1, t_2\}$—is vital for the construction of the fundamental weight function in Lemma \ref{lemma:lemma_Observation_1}. This construction is instrumental in proving null controllability via Carleman estimates with moving control regions. Nonetheless, results such as those in \cite{martin2013null} show that in one-dimensional cases, this condition is not necessary for achieving null controllability of the structurally damped wave equation with moving control. For further insights regarding the sharpness of Assumption \ref{Ass:Assum_Obser_1}, we refer the reader to the discussion in \cite{chaves2014null}.
\end{remark}

\section{Application}
 Consider the heat control system with a delay given by \cite{sukavanam2011approximate} :

 \begin{equation}
\begin{cases}
\frac{\partial y(t,x)}{\partial t} = \frac{\partial^2 y(t,x)}{\partial x^2} + y(t - h,x) + Bu(t,x) + \int_0^t M(t-s)y(s,x) ds, & 0 < t < T, \; 0 < x < \pi, \\
y(t,0) = y(t,\pi) = 0, & 0 \le t \le T, \\
y(t,x) = \varphi(x), & -h \le t \le 0, \; 0 \le x \le \pi.
\end{cases}
\end{equation}

Let $X = L^2(0, \pi)$ and $A = -\frac{d^2}{dx^2}$ with 
\[
D(A) = \left\{ y \in X : y, \; \frac{dy}{dx} \text{ are absolutely continuous}, \; \frac{d^2 y}{dx^2} \in X \text{ and } y(0) = y(\pi) = 0 \right\}.
\]

Let $\psi_n(x) = \left( \frac{2}{\pi} \right)^{1/2} \sin(nx), \; 0 \le x \le \pi, \; n = 1, 2, 3, \dots$. Note that $\psi_n$ is the eigenfunction corresponding to the eigenvalue $\lambda_n = -n^2$ of the operator $A$ and $\{ \psi_n \}$ is an orthonormal base for $X$. Then for $y \in D(A)$ we have \cite{jha2024exact,jha2024existence} 
\[
y = \sum_{n=1}^\infty \langle y, \psi_n \rangle \psi_n, \quad Ay = -\sum_{n=1}^\infty n^2 \langle y, \psi_n \rangle \psi_n.
\]

It is well known that $A$ generates a compact semigroup $T(t)$ \cite{naito1987controllability,jeong1999approximate,jha2025approximate} and that
\[
T(t)y = \sum_{n=1}^\infty e^{-n^2 t} \langle y, \psi_n \rangle \psi_n, \quad n = 1, 2, 3, \dots, \text{ for } y \in X.
\]

Now we define an infinite dimensional control space $U$ by
\[
U = \left\{ u = \sum_{n=2}^\infty u_n \psi_n, \; \sum_{n=2}^\infty u_n^2 < \infty \right\} \quad \text{with norm defined by } \| u \|_U = \left( \sum_{n=2}^\infty u_n^2 \right)^{1/2}.
\]

Then $U$ is a Hilbert space. Define a continuous linear mapping $\tilde{B}$ from $U$ to $X$ by
\[
\tilde{B}u = 2u_2 \psi_1 + \sum_{n=2}^\infty u_n \psi_n, \quad \text{for } u = \sum_{n=2}^\infty u_n \psi_n \in U.
\]

We define bounded linear operator $B : L^2(0,T:U) \to L^2(0,T:X)$ by $(Bu)(t) = \tilde{B}u(t)$.

Then we reformulate the heat control system to the abstract form of a Delay and memory-type semilinear control system with delay (\ref{eq:Intro_eq_7}) in the following way. Put $A_1 = A$, 
\begin{align*}
y' &= Ay(t) + A_1 y(t - h) + Bu(t)  + \int_0^t M(t-s)y(s,x) ds, \quad 0 < t < T \\
y(t) &= \varphi(t), \quad t \in [-h, 0].
\end{align*}

Since $T(t)$ is compact semigroup, then the fundamental solution $S(t)$ is compact.

It can be shown that assumption \ref{Ass:Assum_Obser_1} is valid with similar arguments as in (\ref{eq:Intro_eq_7}).  
Using Theorem \ref{the:the_obs_2}, the control system given by Eq.~(\ref{eq:Intro_eq_7}) is null controllable if assumption (\ref{Ass:Assum_Obser_1}) is satisfied.

\subsection{Numerical Experiment}

We present a numerical example to illustrate the Delay and memory-type null controllability of the delayed heat equation with memory, as established in Theorem~\ref{the:the_obs_2}. Consider the following equation:

\begin{equation}
\label{eq:numerical}
\begin{cases}
\partial_t y(t,x) = \partial_{xx} y(t,x) + y(t - h, x) + \displaystyle\int_0^t e^{-(t - s)} y(s,x)\,ds + u(t,x)\chi_{\omega(t)}(x), & t \in (0,T),\, x \in (0,\pi), \\
y(t,0) = y(t,\pi) = 0, & t \in [0,T], \\
y(t,x) = \sin(x), & t \in [-h,0],\, x \in (0,\pi),
\end{cases}
\end{equation}
where $h = 0.1$ and $T = 1$. The memory kernel is $M(t) = e^{-t}$ and the initial history is given by $\varphi(t,x) = \sin(x)$. The control is applied in a moving spatial region $\omega(t) = [\delta(t), \delta(t)+0.5]$, where $\delta(t) = \frac{(\pi - 0.5)t}{T}$, which ensures that the entire spatial domain is covered over the interval $[0,T]$.

The control function is chosen as
\[
u(t,x) = -t\, e^{-\pi^2 t} \sin(\pi x),
\]
which satisfies the regularity conditions required by the control framework and vanishes at $t=0$, ensuring compatibility with the Delay and memory-type   null controllability goal.

To numerically solve~\eqref{eq:numerical}, we employ the following discretization:
\begin{itemize}
    \item \textbf{Spatial discretization:} Second-order finite difference with $N_x = 50$ points in $[0,\pi]$.
    \item \textbf{Temporal discretization:} Implicit Euler method with $N_t = 200$ steps over $[0,T]$.
    \item \textbf{Delay term:} The value $y(t-h,x)$ is approximated using stored past values at the corresponding delayed time step.
    \item \textbf{Memory term:} The convolution $\int_0^t \widetilde{M}(t-s)y(s,x)\,ds$ is approximated using a discrete sum over past steps with trapezoidal weights.
    \item \textbf{Control implementation:} The control is applied only at grid points $x_j \in \omega(t_n)$ at each time step, based on the analytical formula for $u(t,x)$.
\end{itemize}
\newpage
Figure~\ref{fig:simulation} displays the numerical solution \( y(t,x) \) and its Slice surface at final time \( t = T \), showing that both the solution state and its outer layer approach the \( Z \)-axis for \( t \in [T - h, T + h] \). This observation validates the theoretical result on delay and memory-type null controllability, confirming that all three conditions in Definition~(\ref{def:Def_memory_type of null control}) are satisfied.

\begin{figure}[htbp]
\centering
\begin{minipage}{0.95\textwidth}
  \centering
  \includegraphics[width=\textwidth]{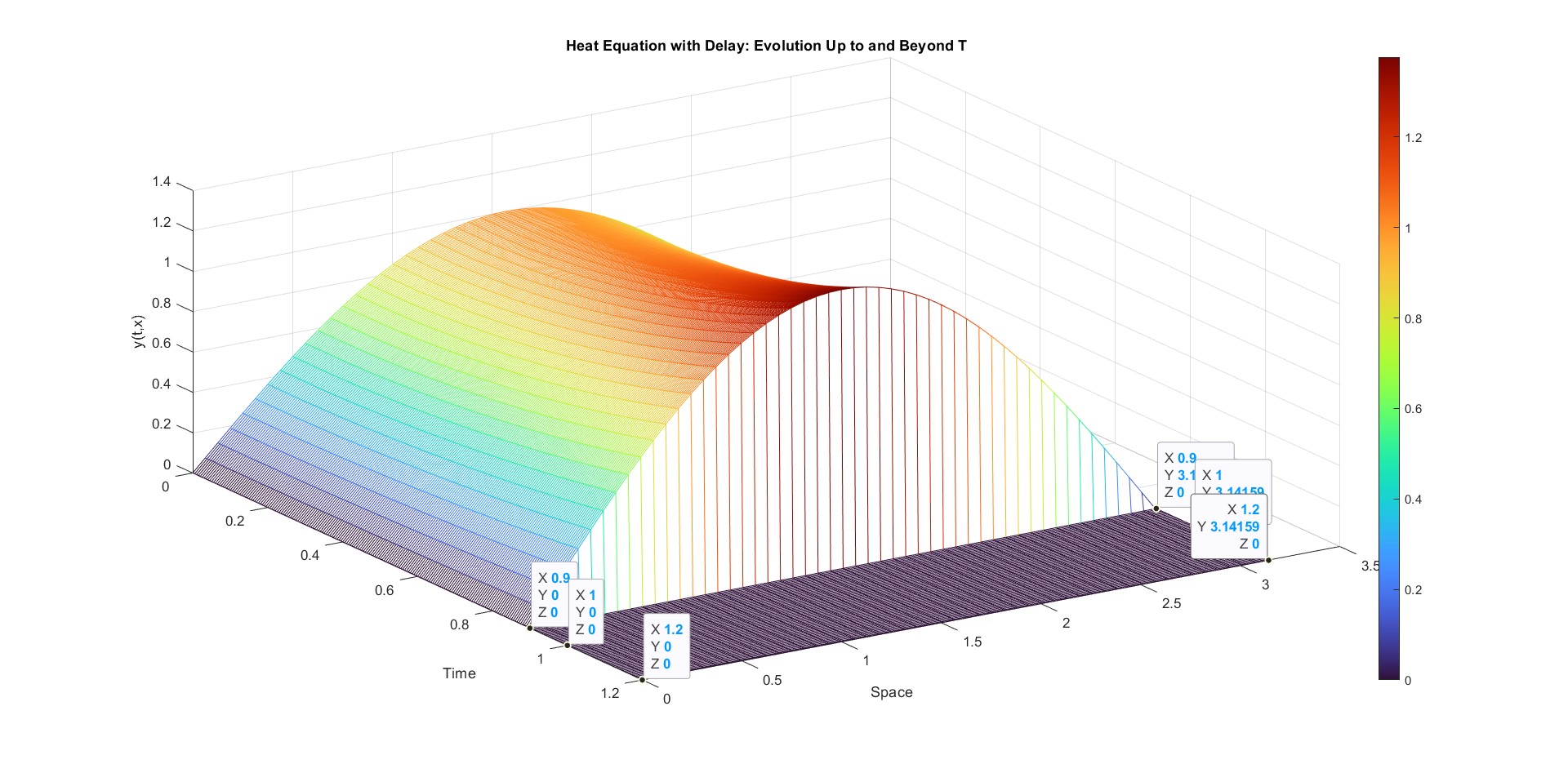}
\end{minipage}
\hfill
\begin{minipage}{0.95\textwidth}
  \centering
  \includegraphics[width=\textwidth]{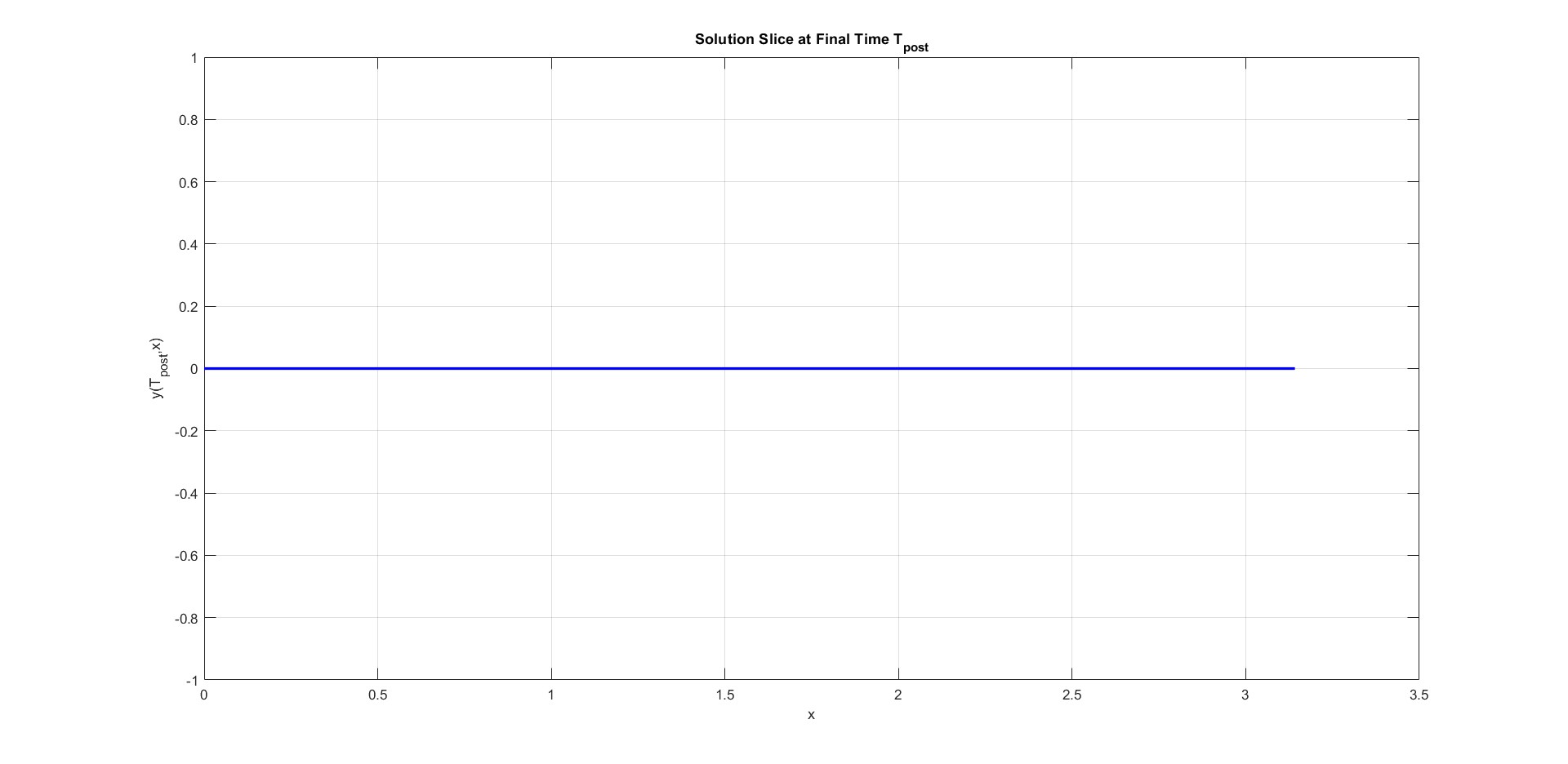}
\end{minipage}
\caption{The numerical solution \( y(t,x) \) and its Slice surface at final time \( t = T \) of the heat equation with memory and delay are obtained using the control \( u(t,x) = -t\, e^{-\pi^2 t} \sin(\pi x) \) in a moving region \( \omega(t) \).
}
\label{fig:simulation}
\end{figure}
Figure~\ref{fig:simulation_1} illustrates the numerical solution \( y(t,x) \) and its Slice surface at final time \( t = T \). At this moment, only the first two conditions of Definition~(\ref{def:Def_memory_type of null control}) are satisfied. As a result, both the solution and its outer layer do not approach the \( Z \)-axis for \( t \in [T - h, T + h] \) or at \( t = T \). This outcome confirms that all three conditions in Definition~(\ref{def:Def_memory_type of null control}) are necessary to achieve delay and memory-type null controllability.

\begin{figure}[htbp]
\centering
\begin{minipage}{0.95\textwidth}
  \centering
  \includegraphics[width=\textwidth]{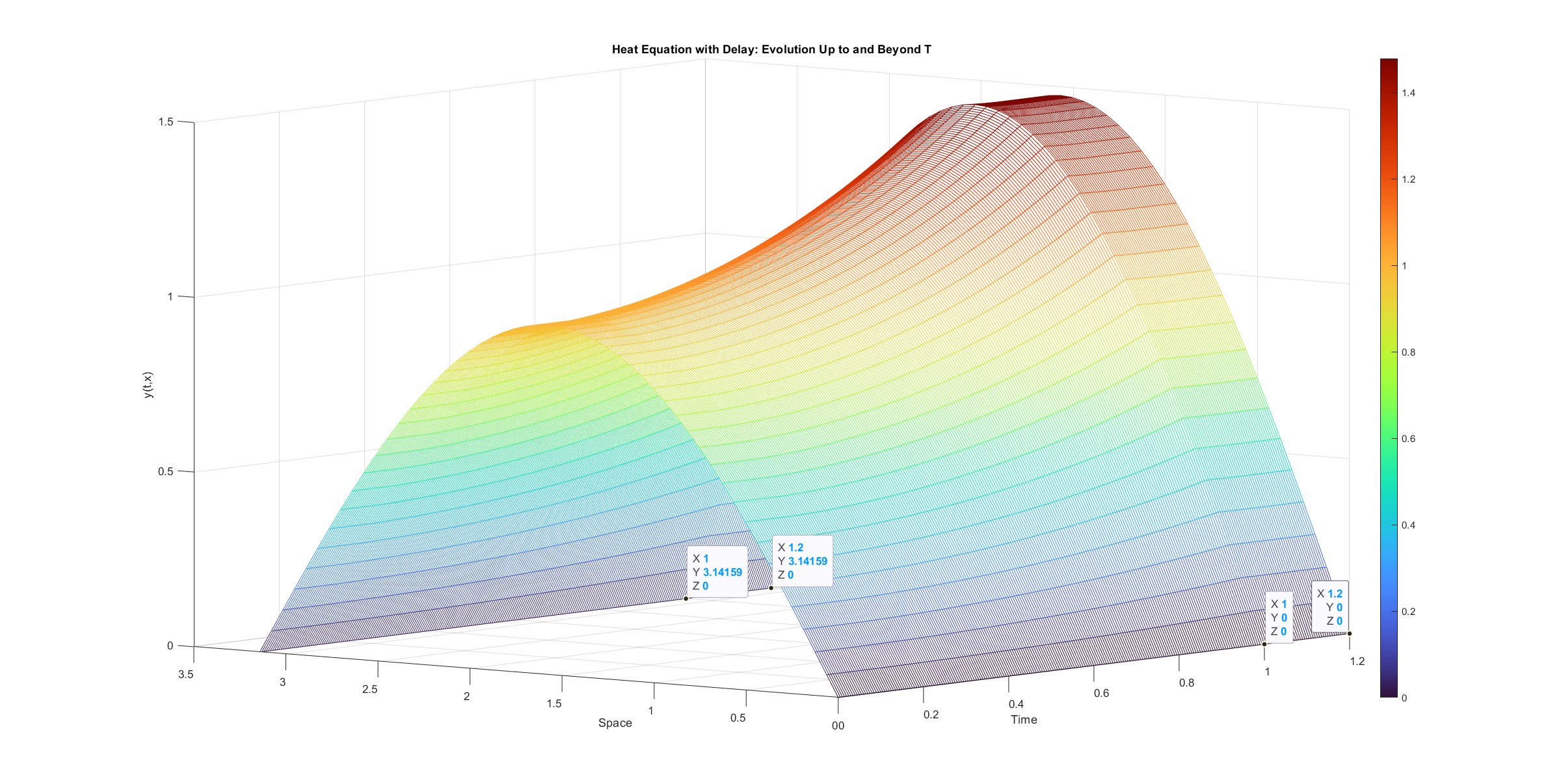}
\end{minipage}
\hfill
\begin{minipage}{0.95\textwidth}
  \centering
  \includegraphics[width=\textwidth]{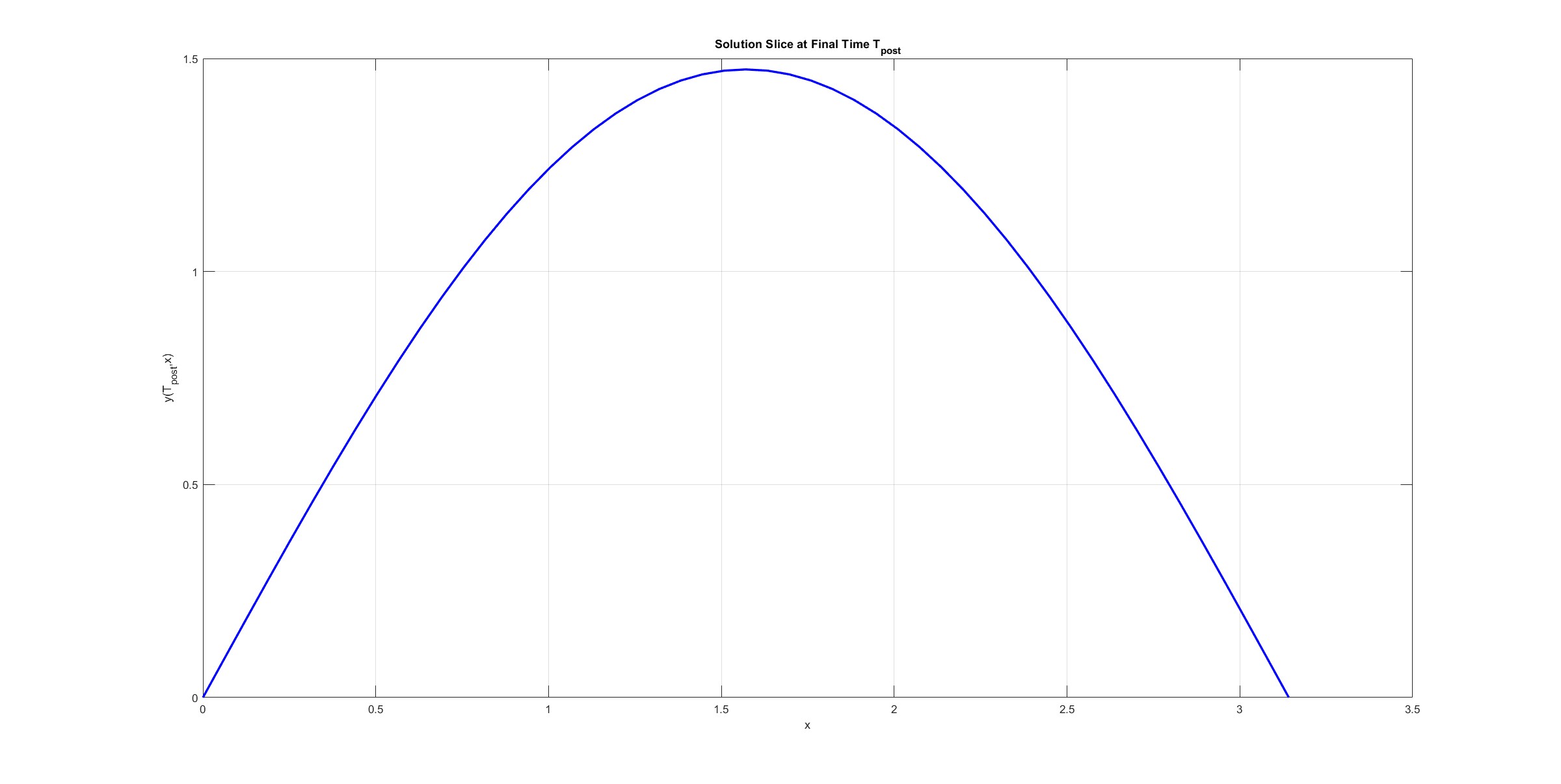}
\end{minipage}
\caption{The numerical solution \( y(t,x) \) and its Slice surface at final time \( t = T \) of the heat equation with memory and delay are obtained using the control \( u(t,x) = -t\, e^{-\pi^2 t} \sin(\pi x) \) in a moving region \( \omega(t) \).
}
\label{fig:simulation_1}
\end{figure}

\section{Data Availability}
	The authors used no available data when preparing it for publication. It does not have any data to share.

\section{Declarations:}

\textbf{Conflict of interest} \\
All authors have no conflict of interest in any form regarding this paper.

\bibliographystyle{unsrtnat} 
\bibliography{references} 

\end{document}